\newtheorem{theorem}{Theorem}[section]
\newtheorem{lemma}[theorem]{Lemma}
\newtheorem{corollary}[theorem]{Corollary}
\newtheorem{conjecture}[theorem]{Conjecture}  
\newtheorem{problem}[theorem]{Problem}  
\theoremstyle{remark}
\DeclareMathOperator\des{des}
\DeclareMathOperator\exc{exc}
\DeclareMathOperator\asc{asc}
\DeclareMathOperator\cdes{cdes}
\DeclareMathOperator\casc{casc}
\DeclareMathOperator\Var{Var}
\DeclareMathOperator\plat{plat}
\DeclareMathOperator\lea{lea}
\DeclareMathOperator\emp{emp}
\newcommand{\QQ}{\overline{\mathcal{Q}}}
\newcommand{\QQQ}{\widetilde{\mathcal{Q}}}
\newcommand{\Q}{\mathcal{Q}}
\newcommand{\T}{\mathcal{T}}
\newcommand{\I}{\mathcal{I}}
\newcommand{\In}{\mathcal{J}}
\newcommand{\U}{\mathcal{U}}
\newcommand{\A}{\mathcal{A}}
\renewcommand{\S}{\mathcal{S}}
\renewcommand{\d}{d}
\newcommand{\oQ}{\overline{Q}}
\newcommand{\oP}{\overline{P}}
\newcommand{\oG}{G} 
\DeclareMathOperator\Seq{\textsc{Seq}}
\date{}
\title{Descents on quasi-Stirling permutations}
\author{Sergi Elizalde}
\begin{document}

\maketitle

\begin{abstract}
Stirling permutations were introduced by Gessel and Stanley~\cite{gessel_stirling_1978}, who used their
enumeration by the number of descents to give a combinatorial interpretation of certain polynomials 
related to Stirling numbers.

Quasi-Stirling permutations, which can be viewed as labeled noncrossing matchings, were introduced by Archer et al.\ \cite{archer_pattern_2019} as a natural extension of Stirling permutations. 
Janson's correspondence~\cite{janson_plane_2008} between Stirling permutations and labeled increasing plane trees
extends to a bijection between quasi-Stirling permutations and the same set of trees without the increasing restriction.

Archer et al.~\cite{archer_pattern_2019} posed the problem of enumerating quasi-Stirling permutations by the number of descents, and conjectured
that there are $(n+1)^{n-1}$ such permutations of size $n$ having the maximum number of descents. In this paper we prove their conjecture, and we give the generating function for quasi-Stirling permutations by the number of descents, expressed as a compositional inverse of the generating function of Eulerian polynomials. We also find the analogue for quasi-Stirling permutations of the main result from~\cite{gessel_stirling_1978}. We prove that the distribution of descents on these permutations is asymptotically normal, and that the roots of the corresponding quasi-Stirling polynomials are all real, in analogy to B\'ona's results for Stirling permutations~\cite{bona_real_2008}.

Finally, we generalize our results to a one-parameter family of permutations that extends $k$-Stirling permutations, and we refine them by also keeping track of the number of ascents and the number of plateaus.
\end{abstract}

\section{Introduction}

\subsection{Stirling permutations}

In 1978, Gessel and Stanley~\cite{gessel_stirling_1978} introduced the set $\Q_n$ of Stirling permutations. They are defined as those permutations $\pi_1\pi_2\dots\pi_{2n}$ of the multiset $\{1,1,2,2,\dots,n,n\}$ satisfying that, if $i<j<k$ and $\pi_i=\pi_k$, then $\pi_j>\pi_i$.
In pattern avoidance terminology, we can describe this condition as avoiding the pattern $212$.

In general, given two sequences of positive integers $\pi=\pi_1\pi_2\dots\pi_r$ and $\sigma=\sigma_1\sigma_2\dots\sigma_s$, we say that $\pi$ avoids $\sigma$ if there is no subsequence
$\pi_{i_1}\pi_{i_2}\dots \pi_{i_s}$ (with $i_1<i_2<\dots<i_s$) whose entries are in the same relative order as $\sigma_1\sigma_2\dots\sigma_s$.

Using the notation $[r]=\{1,2,\dots,r\}$, define $i\in[r]$ to be a {\em descent} of $\pi=\pi_1\pi_2\dots\pi_r$ if $\pi_i>\pi_{i+1}$ or $i=r$, and let 
$\des(\pi)$ denote the number of descents of $\pi$. This is the same definition used in~\cite{gessel_stirling_1978,bona_real_2008,janson_plane_2008,janson_generalized_2011}, even though other papers, such as~\cite{archer_pattern_2019}, do not consider $i=r$ to be a descent.
 Descents are closely related to {\em ascents}, which are indices $i\in\{0,\dots,r-1\}$ such that $\pi_i<\pi_{i+1}$ or $i=0$, and 
to {\em plateaus}, which are indices $i\in[r-1]$ such that $\pi_i=\pi_{i+1}$. Let $\asc(\pi)$ and $\plat(\pi)$ denote the number of ascents and the number of plateaus of $\pi$, respectively.

Denoting by $\S_n$ the set of permutations of $[n]$, the polynomials 
\begin{equation}\label{eq:Eulerian_def}
A_n(t)=\sum_{\pi\in\S_n} t^{\des(\pi)}
\end{equation}
 are called {\em Eulerian polynomials}. It is well known (see for example \cite[Prop.\ 1.4.4]{stanley_enumerative_2012}) that
\begin{equation}\label{eq:Eulerian}
\sum_{m\ge0} m^n t^m=\frac{A_n(t)}{(1-t)^{n+1}},
\end{equation}
and in fact this formula is often used as the definition of Eulerian polynomials.

Gessel and Stanley~\cite{gessel_stirling_1978} show that, when replacing the coefficients in the left-hand side of Equation~\eqref{eq:Eulerian} by Stirling numbers of the second kind, then the role of the Eulerian polynomials is played by the {\em Stirling polynomials}
$$Q_n(t)=\sum_{\pi\in\Q_n} t^{\des(\pi)},$$
which count Stirling permutations by the number of descents. Specifically, denoting by $S(n,m)$ the number of partitions of an $n$-element set into $m$ blocks, they prove the following.

\begin{theorem}[\cite{gessel_stirling_1978}]\label{thm:GS}
$$\sum_{m\ge0} S(m+n,m)\, t^m=\frac{Q_n(t)}{(1-t)^{2n+1}}.$$
\end{theorem}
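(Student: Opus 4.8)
The plan is to prove the identity by showing that both sides, viewed as functions of $n$, satisfy the same first-order recurrence (in $t$) together with the same initial value at $n=0$. Concretely, I would first establish the recurrence
\begin{equation}
Q_n(t)=t(1-t)\,Q_{n-1}'(t)+(2n-1)\,t\,Q_{n-1}(t),\qquad Q_0(t)=1,
\end{equation}
for the Stirling polynomials, and separately the recurrence
\begin{equation}
F_n(t)=\frac{t}{1-t}\,F_{n-1}'(t),\qquad F_0(t)=\frac1{1-t},
\end{equation}
for the left-hand side $F_n(t):=\sum_{m\ge0}S(m+n,m)\,t^m$. Setting $R_n(t):=Q_n(t)/(1-t)^{2n+1}$, a direct differentiation shows that the first recurrence is equivalent to $(1-t)R_n=t\,R_{n-1}'$, i.e.\ $R_n$ obeys exactly the recurrence of the second display; since $R_0=1/(1-t)=F_0$, an induction on $n$ yields $R_n=F_n$, which is the claim.

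The combinatorial heart of the argument — and the step I expect to be the main obstacle — is the recurrence for $Q_n$, which I would obtain by inserting the block $nn$ into a Stirling permutation $\pi'\in\Q_{n-1}$. Since $n$ exceeds every entry of $\pi'$, placing $nn$ in any of the $2n-1$ slots of $\pi'$ preserves the $212$-avoidance, so this insertion is a bijection from $\Q_{n-1}\times\{\text{slots}\}$ onto $\Q_n$. The delicate point is the bookkeeping of how $\des$ changes, using the convention that the final position is always a descent: inserting $nn$ in a slot creates the plateau $nn$ together with an ascent into the first $n$ and a descent out of the second $n$. Carefully distinguishing the slots immediately following an internal descent (together with the terminal slot), where the count is unchanged, from the remaining slots, where it increases by one, gives exactly $q_{n,j}=j\,q_{n-1,j}+(2n-j)\,q_{n-1,j-1}$ for $q_{n,j}=\card{\{\pi\in\Q_n:\des(\pi)=j\}}$; translating this to generating functions produces the first displayed recurrence. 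Verifying the edge cases (the leftmost and rightmost slots, and the interaction with the terminal-descent convention) is where I would be most careful.

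The second recurrence is routine: applying the Stirling recurrence $S(m+n,m)=m\,S(m+n-1,m)+S(m+n-1,m-1)$ termwise and recognizing $\sum_m m\,S(m+n-1,m)\,t^m=t\,F_{n-1}'(t)$ and $\sum_m S(m+n-1,m-1)\,t^m=t\,F_n(t)$ gives $(1-t)F_n=t\,F_{n-1}'$, while $S(m,m)=1$ supplies the initial condition $F_0=1/(1-t)$. The only subtlety here is correctly shifting indices so that the two terms of the Stirling recurrence are seen as the $(n-1,m)$ and $(n,m-1)$ contributions. An appealing alternative to the recurrence for $Q_n$ would be a direct $P$-partition count: expanding $1/(1-t)^{2n+1}$ reduces the theorem to the identity $S(m+n,m)=\sum_{\pi\in\Q_n}\binom{2n+m-\des(\pi)}{2n}$, and one can try to interpret $\binom{2n+m-\des(\pi)}{2n}$ as the number of weakly increasing sequences in $[m]^{2n}$ that strictly increase at the internal descents of $\pi$, and then biject the resulting pairs with set partitions of $[m+n]$ into $m$ blocks. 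Constructing that bijection explicitly looks more demanding than the insertion argument, however, so I would keep the recurrence route as the primary plan.
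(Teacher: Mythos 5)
Your argument is correct, but note that the paper does not prove this statement at all: Theorem~\ref{thm:GS} is quoted from Gessel and Stanley~\cite{gessel_stirling_1978}, so there is no in-paper proof to compare against. What you have reconstructed is essentially Gessel and Stanley's original argument: the insertion of the adjacent block $nn$ into $\pi'\in\Q_{n-1}$ gives $q_{n,j}=j\,q_{n-1,j}+(2n-j)\,q_{n-1,j-1}$ (your slot count checks out, including the two edge cases --- the terminal slot leaves $\des$ unchanged because the old final descent is destroyed and a new one created, and the leftmost slot adds one), the Stirling recurrence $S(m+n,m)=m\,S(m+n-1,m)+S(m+n-1,m-1)$ gives $(1-t)F_n=t\,F_{n-1}'$, and the two recurrences match after dividing by $(1-t)^{2n+1}$. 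Your alternative sketch via barred permutations and the identity $S(m+n,m)=\sum_{\pi\in\Q_n}\binom{2n+m-\des(\pi)}{2n}$ corresponds to Gessel and Stanley's second proof, which is precisely the one the paper asks to generalize to $\QQ_n$ in Section~\ref{sec:further}.
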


It follows, in particular, that $|\Q_n|=(2n-1)!!=(2n-1)\cdot(2n-3)\cdot\dots\cdot 3\cdot 1$.

There is an extensive literature on Stirling permutations and their generalizations. B\'ona~\cite{bona_real_2008} showed that the distribution of plateaus on $\Q_n$ is also given by the polynomial $Q_n(t)$, that this polynomial has only real roots (this had also been proved by Brenti~\cite[Thm.\ 6.6.3]{brenti_unimodal_1989}), and that this distribution converges to a normal distribution. More generally, Janson~\cite{janson_plane_2008} showed that the joint distribution ascents, descents and plateaus is asymptotically normal, and Haglund and Visontai~\cite{haglund_stable_2012} proved the stability of the corresponding multivariate polynomials.

Gessel and Stanley \cite{gessel_stirling_1978} proposed an extension of Stirling permutations by allowing $k$ copies of each element in $[n]$. These permutations were studied by Brenti~\cite{brenti_unimodal_1989} in an even more general setting, proving real-rootedness of their descent polynomials; by Park~\cite{park_r-multipermutations_1994,park_inverse_1994}, who studied the distribution of various statistics on them; and by Janson, Kuba and Panholzer~\cite{janson_generalized_2011}, who proved a joint normal law for ascents, descents and plateaus. Other generalizations have been studied by Barbero et al.~\cite{barbero_g._generalized_2015}.

\subsection{Quasi-Stirling permutations}

In~\cite{archer_pattern_2019}, Archer, Gregory, Pennington and Slayden introduce the set $\QQ_n$ of {\em quasi-Stirling} permutations. These are  permutations $\pi_1\pi_2\dots\pi_{2n}$ of the multiset $\{1,1,2,2,\dots,n,n\}$ avoiding $1212$ and $2121$; that is, those for which there do not exist $i<j<k<\ell$ such that $\pi_i=\pi_k$ and $\pi_j=\pi_\ell$. Thinking of $\pi$ as a labeled matching of $[2n]$, by placing an arc between with label $k$ between $i$ with $j$ if $\pi_i=\pi_j=k$, the avoidance requirement is equivalent to the matching being {\em noncrossing} (see~\cite[Exercise 6.19(o)]{stanley_enumerative_1999}). By definition, $\Q_n\subseteq\QQ_n$.

Archer et al.~\cite{archer_pattern_2019} note that 
\begin{equation}\label{eq:nCat}
|\QQ_n|=n!\,C_n=\frac{(2n)!}{(n+1)!},\quad  \text{where }C_n=\frac{1}{n+1}\binom{2n}{n}
\end{equation} 
is the $n$th Catalan number. They also compute the number of permutations in $\QQ_n$ avoiding some sets of patterns of length 3, and they 
enumerate quasi-Stirling permutations by the number of plateaus.
They pose the open problem of enumerating quasi-Stirling permutations by the number of descents, and they conjecture the following intriguing formula\footnote{The statement of the conjecture in~\cite{archer_pattern_2019} mentions permutations with $n-1$ descents, since their definition does not consider the last position $2n$ to be a descent.}.

\begin{conjecture}[\cite{archer_pattern_2019}]\label{conj:archer}
The number of $\pi\in\QQ_n$ with $\des(\pi)=n$ is equal to $(n+1)^{n-1}$.
\end{conjecture}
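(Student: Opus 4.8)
The plan is to prove the statement through Janson's correspondence, which identifies $\QQ_n$ with the set of labeled plane trees on the vertex set $\{0,1,\dots,n\}$ rooted at $0$, the permutation being read off by a depth-first traversal that records the label of each non-root vertex when its subtree is entered (an ``opening'') and again when it is exited (a ``closing''). First I would translate the descent statistic into this tree picture by classifying each pair of consecutive letters according to whether each is an opening or a closing. An opening immediately followed by a closing forces the two letters to be equal and occurs exactly at a leaf, so $\plat(\pi)$ counts leaves; every other pair of consecutive letters compares two distinct labels and is thus a genuine ascent or descent. Concretely, the comparisons attached to a vertex $v$ with children $c_1,\dots,c_k$ (in plane order) are $v$ versus $c_1$ (opening--opening), $c_i$ versus $c_{i+1}$ (closing--opening), and $c_k$ versus $v$ (closing--closing), while the root contributes only the comparisons between its consecutive children, and position $2n$ is counted as a free descent.

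The key step is the upper bound $\des(\pi)\le n$. The comparisons attached to a fixed non-root internal vertex $v$ form the cyclic sequence $v,c_1,\dots,c_k,v$, and since a cyclic arrangement of $k+1$ distinct numbers cannot decrease all the way around, at least one of these $k+1$ comparisons is an ascent; hence at most $k$ of them are descents. The root's children instead contribute a linear (non-cyclic) chain of $m-1$ comparisons, all of which may be descents. Summing the per-vertex maxima, adding the free descent at position $2n$, and using that the total number of children over all vertices equals the number of edges $n$, I would obtain $\des(\pi)\le 1+(n-m)+(m-1)=n$, which also shows the maximum $n$ is attained.

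Finally I would analyze the equality case. Equality forces exactly one ascent in each non-root internal family and a strictly decreasing order among the root's children; unwinding the cyclic condition shows that each vertex $v$ must list its children so that those smaller than $v$ come first in decreasing order, followed by those larger than $v$ in decreasing order (the root, all of whose children exceed $0$, being the degenerate instance of this same rule). Since this rule determines the plane order of each vertex's children from their labels alone, the plane structure becomes redundant: the permutations with $\des(\pi)=n$ are in bijection with (non-plane) labeled rooted trees on $\{0,1,\dots,n\}$ rooted at $0$. By Cayley's formula there are $(n+1)^{n-1}$ such trees, giving the claimed count. The hard part will be the bookkeeping in the bound---pinning down the root-versus-internal asymmetry and the ``one forced ascent per non-root internal vertex''---after which the reduction to Cayley's formula via the canonical child-ordering is the conceptual payoff.
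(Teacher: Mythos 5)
Your proposal is correct and follows essentially the same route as the paper: it passes to the plane-tree model, localizes descents as cyclic comparisons around each vertex (the paper's $\cdes(v)$), bounds each non-root vertex's contribution by its number of children via the "a cyclic arrangement of distinct numbers must have an ascent" observation, and characterizes the equality case by the unique child-ordering (smaller labels decreasing, then larger labels decreasing), reducing to Cayley's formula for unordered labeled trees. The only difference is cosmetic: you carry vertex labels with root $0$ where the paper uses edge labels and a root labeled $n+1$.
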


\subsection{Structure of the paper}

In Section~\ref{sec:des} we prove Conjecture~\ref{conj:archer}, stated as Theorem~\ref{thm:desn}. More generally, in Theorem~\ref{thm:main}, we describe the generating function enumerating quasi-Stirling permutations by the number of descents. An analogue of Theorem~\ref{thm:GS} for quasi-Stirling permutations is given in Theorem~\ref{thm:QQn}. 
In Section~\ref{sec:properties} we study some properties of the distribution of descents on quasi-Stirling permutations, analogous to those studied by B\'ona~\cite{bona_real_2008} for Stirling permutations. We show that the corresponding polynomials have real roots only, and that the distribution of descents is asymptotically normal.

In Section~\ref{sec:k} we consider an extension of quasi-Stirling permutations by allowing $k$ copies of each element in $[n]$, in analogy to $k$-Stirling permutations. We generalize the results from Section~\ref{sec:des} to this setting, and we refine them by considering the joint distribution of the number of ascents, the number of descents, and the number of plateaus on our generalized quasi-Stirling permutations. Finally, we give a simple description of the joint distribution of the same statistics on $k$-Stirling permutations.

It is worth pointing out that Stirling permutations (and, more generally, $k$-Stirling permutations) have a simple recursive description, since elements in $\Q_n$ can be obtained by inserting the adjacent pair $nn$ into elements in $\Q_{n-1}$. This fact is repeatedly used in~\cite{gessel_stirling_1978,bona_real_2008,janson_plane_2008}, and it
greatly simplifies the enumeration of Stirling permutations by certain statistics. For example, it yields a recurrence for the number of Stirling permutations with a given number of descents or plateaus~\cite{bona_real_2008}, and it enables a probabilistic proof of the symmetry of the numbers of ascents, descents and plateaus~\cite{janson_plane_2008}. Unfortunately, quasi-Stirling permutations do not have such a simple recursive description (in fact, the quotient $|\QQ_{n}|/|\QQ_{n-1}|=2n(2n-1)/(n+1)$ is not an integer in general), which makes their enumeration with respect to the number of descents more challenging. Nevertheless, we are still able to find analogues for quasi-Stirling permutations of most of the results from~\cite{gessel_stirling_1978} and \cite{bona_real_2008}.

\subsection{A bijection to plane trees}\label{sec:bij}

The original motivation for considering quasi-Stirling permutations in~\cite{archer_pattern_2019} is that they are in bijection with labeled plane rooted trees, in much the same way that Stirling permutations are in bijection with increasing trees. Next we describe these bijections.

Denote by $\T_n$ the set of edge-labeled plane (i.e., ordered) rooted trees with $n$ edges. Each edge of such a tree receives a unique label from $[n]$. The {\em root} is a distinguished vertex of the tree, which we place at the top. The {\em children} of a vertex $i$ are the neighbors of $i$ that are not in the path from $i$ to the root; the neighbor of $i$ in the path to the root (if $i$ is not the root) is called the {\em parent} of $i$. The children of $i$ are placed below $i$, and the left-to-right order in which they are placed matters. Vertices with no children are called {\em leaves}.

Disregarding the labels, it is well known that the number of unlabeled plane rooted trees with $n$ edges is $C_n$. Since there are $n!$ ways to label the edges of a particular tree, it follows that $|\T_n|=n!\,C_n$. 

Denote by $\I_n\subseteq\T_n$ be the subset of those trees whose labels along any path from the root to a leaf are increasing. Elements of $\I_n$ are called edge-labeled increasing plane trees, or simply increasing trees when there is no confusion.

A simple bijection between $\I_n$ and $\Q_n$ was given by Janson in~\cite{janson_plane_2008}. Archer at al.~\cite{archer_pattern_2019} showed that this bijection naturally extends to a bijection $\varphi$ between $\T_n$ and $\QQ_n$. 
Both bijections can be described as follows. Given a tree $T\in\T_n$, traverse its edges by following a depth-first walk from left to right (i.e., counterclockwise); that is, start at the root, go to the leftmost child and explore that branch recursively, return to the root, then continue to the next child, and so on (see \cite[Fig.\ 5-14]{stanley_enumerative_1999} for a visual description). 
Recording the labels of the edges as they are traversed gives a permutation $\varphi(T)\in\QQ_n$;
see Figure~\ref{fig:varphi} for an example. Note that each edge is traversed twice, once in each direction.  As shown in~\cite{archer_pattern_2019}, the map $\varphi:\T_n\to\QQ_n$ is a bijection. Additionally, the image of the subset of increasing trees is precisely the set of Stirling permutations, and so $\varphi$ induces a bijection between $\I_n$ and $\Q_n$, which is the map described in~\cite{janson_plane_2008}.

\begin{figure}[htb]
\centering
 \begin{tikzpicture}[scale=1.2]
     \draw[thick] (1.5,3) coordinate(d0) -- (0,2) coordinate(d4) -- (0,1) coordinate(d1);
     \draw[thick] (d0) -- (1,2) coordinate(d6); 
     \draw[thick] (d0) -- (2.5,2) coordinate(d3) -- (1.5,1) coordinate(d7);
     \draw[thick] (d3) -- (2.5,1) coordinate(d5) -- (2.5,0) coordinate(d8);
     \draw[thick] (d3) -- (3.5,1) coordinate(d2);
      \foreach \x in {0,...,8} {
        \draw[fill] (d\x) circle (2pt);
      }
      \foreach \x in {4,6} {
        \draw (d\x)+(.45,.5) node {$\x$};
      }
      \foreach \x in {7} {
        \draw (d\x)+(.2,.5) node {$\x$};
      }
      \foreach \x in {3,2,5,8,1} {
        \draw (d\x)+(-.15,.5) node {$\x$};
      }
      \draw (4,1.5) node[right] {$\longrightarrow \quad 4114663775885223$};
      \draw (4.15,1.5) node[above right]{$\varphi$};
      \end{tikzpicture}
      \caption{An example of the bijection $\varphi:\T_n\to\QQ_n$.}
      \label{fig:varphi}
\end{figure}

\section{Descents on quasi-Stirling permutations}\label{sec:des}

In order to enumerate quasi-Stirling permutations by the number of descents, let us first analyze how descents are transformed by the bijection $\varphi$.

Define the number of {\em cyclic descents} of a sequence of positive integers $\pi=\pi_1\pi_2\dots\pi_r$ to be
\begin{equation}\label{eq:cdes_def} \cdes(\pi)=|\{i\in[r]: \pi_i>\pi_{i+1}\}|,
\end{equation}
with the convention $\pi_{r+1}:=\pi_1$. Note that rotating the entries of $\pi$ does not change the number of cyclic descents, that is,
$\cdes(\pi_{i+1}\dots\pi_r\pi_1\dots\pi_i)=\cdes(\pi)$ for all $i\in[r]$.

Let $T\in\T_n$, and let $v$ a vertex of $T$. If $v$ is not the root, define $\cdes(v)$ to be the number of cyclic descents of the sequence obtained by listing the labels of the edges incident to $v$ in counterclockwise order (note that the starting point is irrelevant). Equivalently, if the
label of the edge between $v$ and its parent is $\ell$, and the labels of the edges between $v$ and its children are $a_1,a_2,\dots,a_{d}$ from left to right, then $\cdes(v)=\cdes(\ell a_1\dots a_d)$. If $v$ is the root of $T$, define $\cdes(v)$ to be the number of descents of the sequence 
obtained by listing the labels of the edges incident to $v$ from left to right, that is, $\cdes(v)=\des(a_1\dots a_d)$ with the above notation. Finally, define the number of cyclic descents of $T$ to be $$\cdes(T)=\sum_v \cdes(v),$$ where the sum ranges over all the vertices $v$ of $T$.
For example, if $T$ is the tree on the left of Figure~\ref{fig:varphi}, then $\cdes(T)=2+1+0+0+2+0+1+0+0=6$, where the two vertices with $\cdes(v)=2$ are the root and the other vertex with 3 children.

\begin{lemma}\label{lem:cdes}
The bijection $\varphi:\T_n\to\QQ_n$ has the following property: if $T\in\T_n$ and $\pi=\varphi(T)\in\QQ_n$, then $$\des(\pi)=\cdes(T).$$
\end{lemma}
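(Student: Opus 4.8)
The plan is to track, for each vertex of the tree $T$, exactly which descents of $\pi=\varphi(T)$ are ``caused'' by the edges incident to that vertex, and to show that this local count equals $\cdes(v)$. The key observation is that the depth-first walk visits every edge twice, so each edge label $k\in[n]$ appears exactly twice in $\pi$; between the two appearances of $k$ the walk has fully explored the subtree hanging below the edge labeled $k$. The crucial step is to set up a bijection between the positions $i\in[2n]$ (the potential descent positions of $\pi$, including the wrap-around position $i=2n$ which is always a descent by convention) and the pairs (vertex $v$, consecutive pair of edges around $v$). Concretely, each time the walk passes \emph{through} a vertex $v$ — arriving along one incident edge and immediately leaving along the next in counterclockwise order — it records two consecutive entries of $\pi$, and I want to argue that whether this transition is a descent of $\pi$ is governed precisely by comparing the labels of those two incident edges of $v$.

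First I would make the passage-through-a-vertex picture precise. For a non-root vertex $v$ with parent-edge label $\ell$ and child-edge labels $a_1,\dots,a_d$ from left to right, the depth-first walk enters the ``cycle'' of edges around $v$ in the counterclockwise order $\ell,a_1,a_2,\dots,a_d,\ell$ (returning along the parent edge), and the consecutive pairs of entries of $\pi$ that straddle $v$ are exactly those arising from consecutive pairs in this cyclic sequence. This is why $\cdes(v)$ is defined as a \emph{cyclic} descent count: the wrap-around comparison between $a_d$ and $\ell$ corresponds to the moment the walk finishes exploring the last child and returns up the parent edge, which is a genuine adjacency in $\pi$. For the root, there is no parent edge, so the relevant sequence is the linear list $a_1,\dots,a_d$ and one counts ordinary descents; the extra descent at position $i=2n$ (the convention that the last position is always a descent) I would account for separately, matching it against the very last edge traversal of the walk, which returns to the root. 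I expect the bookkeeping to work out so that every one of the $2n$ adjacencies of $\pi$ (cyclically, using the $\pi_{2n+1}:=\pi_1$ convention implicit in $\cdes$, but here as an ordinary $\des$ with the last-position convention) is assigned to exactly one vertex and contributes to a descent of $\pi$ if and only if it is a (cyclic) descent at that vertex.

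The main obstacle, and the step requiring real care, is verifying that the comparison of two \emph{incident edge labels} at $v$ correctly predicts the comparison of the two actual \emph{entries of $\pi$} recorded at that transition. When the walk moves from an edge labeled $a$ to the next edge labeled $b$ around $v$, the entry of $\pi$ it just wrote is $a$ (the label of the edge it arrived on) and the next entry it writes is $b$ (the label it departs on), so the adjacency in $\pi$ is literally $\dots a\,b\dots$ and comparing $a$ with $b$ is exactly comparing the incident labels; this is the clean case. The subtlety is that between arriving at $v$ along one child edge and departing along the next, the walk may first descend into and fully return from the intervening subtree, so I must confirm that the two entries of $\pi$ that are genuinely adjacent at the transition are the labels of the two \emph{consecutive incident edges of $v$} and not some deeper label. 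This follows because the second (returning) traversal of an edge label is recorded at the instant the walk comes back up to $v$, so the entry immediately preceding the departure down the next child is indeed the label of the edge just completed at $v$. I would verify this invariant carefully against the running example ($T$ on the left of Figure~\ref{fig:varphi}, with $\varphi(T)=4114663775885223$ and $\cdes(T)=6=\des(\pi)$), then summing $\cdes(v)$ over all $v$ yields $\des(\pi)$, completing the proof.
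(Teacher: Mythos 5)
Your proposal is correct and follows essentially the same route as the paper's proof: attribute each of the $2n$ potential descent positions of $\pi$ to a transition of the depth-first walk through a vertex $v$, observe that the two adjacent entries of $\pi$ at that transition are the labels of two cyclically consecutive edges incident to $v$ (with the final position $2n$ matched to the walk's last return to the root), and sum the local counts $\cdes(v)$ over all vertices. The paper phrases this as a three-case analysis of when the $i$th step of the walk produces a descent, but the underlying bookkeeping is identical to yours.
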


\begin{proof}
In the counterclockwise depth-first walk performed on $T$ to obtain $\pi$, 
suppose that the $i$th step of the walk traverses edge $e$ in the direction from vertex $u$ to vertex $v$, and let $\ell$ be its label. Then $i$ is a descent of $\pi$ 
if and only if one of the following holds:
\begin{itemize}
\item $u$ is the parent of $v$, and $\ell$ is larger than the label of the edge between $v$ and its leftmost child;
\item $u$ is a child of $v$ (but not its rightmost child), and $\ell$ is larger than the label of the edge between $v$ and its next child in the left-right order;
\item $u$ is the rightmost child of $v$, and either $v$ is the root or $T$, or $\ell$ is larger than the label of the edge between $v$ and its parent.
\end{itemize}
By definition, $\cdes(v)$ counts the number of times that $v$ is involved in one of these scenarios at some point in the depth-first walk on $T$, and so $\cdes(T)$ equals the total number of descents of~$\pi$.
\end{proof}

Another property of $\varphi$ that follows easily from its description, as noted in~\cite{archer_pattern_2019}, is that plateaus of quasi-Stirling permutations correspond to leaves of edge-labeled plane rooted trees. Specifically, denoting by $\lea(T)$ the number of leaves of $T\in\T_n$, and letting $\pi=\varphi(T)\in\QQ_n$, we have that
\begin{equation}\label{eq:lea}
\plat(\pi)=\lea(T).
\end{equation}

\subsection{Quasi-Stirling permutations with most descents}

It follows from Lemma~\ref{lem:cdes} that the maximum value that $\des(\pi)$ can attain for $\pi\in\QQ_n$ is~$n$. To see this, note that if $\pi=\varphi(T)$ and $v$ is a vertex of $T$, then $\cdes(v)$ is bounded from above by the number of children of $v$, which we denote by $\d(v)$. 
Summing over all the vertices $v$ of $T$, we get $\des(\pi)=\cdes(T)=\sum_{v} \cdes(v) \le \sum_{v} d(v)=n$, the number of edges of $T$. This bound is attained, for example, by the permutation $\pi=12\dots nn\dots 21\in\QQ_n$.
Next we count how many permutations attain this upper bound, proving Conjecture~\ref{conj:archer}.

\begin{theorem}\label{thm:desn}
The number of $\pi\in\QQ_n$ with $\des(\pi)=n$ is equal to $(n+1)^{n-1}$.
\end{theorem}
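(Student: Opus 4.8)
The plan is to use Lemma~\ref{lem:cdes} to turn the count into a question about trees. Since $\des(\pi)=\cdes(T)=\sum_v\cdes(v)\le\sum_v\d(v)=n$, with equality exactly when $\cdes(v)=\d(v)$ at \emph{every} vertex $v$, it suffices to count the trees $T\in\T_n$ satisfying this local condition at all vertices. I would first characterize the condition vertex by vertex. For a non-root vertex $v$ with parent-edge label $\ell$ and child-edge labels $a_1,\dots,a_d$ (left to right), we have $\cdes(v)=\cdes(\ell a_1\dots a_d)$, a cyclic word on $d+1$ distinct values; thus $\cdes(v)=d$ forces exactly one cyclic \emph{ascent}. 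A short argument shows that a cyclic word on distinct values has exactly one ascent precisely when, read around the cycle, it is strictly decreasing except for a single jump from its minimum to its maximum. Consequently the cyclic order of the labels incident to $v$ is uniquely determined by the \emph{set} of those labels. For the root, the condition $\cdes(v)=\d(v)$ reduces to $\des(a_1\dots a_d)=d$, and since the last position always counts as a descent this forces $a_1>\dots>a_d$, again a unique order.

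The key consequence I would extract is that, at every vertex, the left-to-right order of the children is completely determined by the set of edge labels incident to that vertex together with the identity of the parent edge. Hence the planar (left-to-right) structure of a descent-maximal tree carries no additional information: forgetting it produces an ordinary non-plane edge-labeled rooted tree, while the planar order can be uniquely reconstructed from the labels by the rule above. This yields a bijection between $\{T\in\T_n:\des(\varphi(T))=n\}$ and the set of non-plane rooted trees with $n$ edges labeled by $[n]$ and an unlabeled but distinguished root.

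It then remains to count the latter objects. Assigning to each non-root vertex its unique parent edge gives a bijection between the $n$ edges and the $n$ non-root vertices, so labeling the edges by $[n]$ is equivalent to labeling the non-root vertices by $[n]$. Treating the root as an extra vertex labeled $0$, these objects are exactly the rooted trees on the vertex set $\{0,1,\dots,n\}$ rooted at $0$, and by Cayley's formula there are $(n+1)^{n-1}$ of them, as desired.

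I expect the main obstacle to be the careful verification of the cyclic-word characterization, and in particular confirming that the induced child-ordering rule is a genuine, well-defined bijection between plane and non-plane labeled trees, handling the root's different definition of $\cdes$ consistently. The remaining steps are routine bookkeeping.
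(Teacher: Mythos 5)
Your proposal is correct and follows essentially the same route as the paper: reduce via Lemma~\ref{lem:cdes} to counting trees with $\cdes(v)=\d(v)$ at every vertex, observe that this condition determines the planar order of the children uniquely from the label sets (decreasing counterclockwise from the largest label at non-root vertices, decreasing left to right at the root), and hence biject with edge-labeled unordered rooted trees, counted by Cayley's formula as $(n+1)^{n-1}$. Your "exactly one cyclic ascent, namely the jump from minimum to maximum" phrasing is just a restatement of the paper's characterization, and the final counting step (root labeled $0$ versus the paper's $n+1$) is the same.
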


\begin{proof}
By Lemma~\ref{lem:cdes}, the problem is equivalent to counting the number of trees $T\in\T_n$ such that $\cdes(T)=n$.
As discussed above, $\cdes(T)=n$ if and only if $\cdes(v)=\d(v)$ for every vertex $v$ of $T$. 
Let $\T^{\max}_n\subseteq\T_n$ be the subset of trees satisfying this condition. 
If $v$ is the root of $T\in\T_n$, then $\cdes(v)=\d(v)$ precisely when the labels of the edges incident to $v$ decrease from left to right. If $v$ is a non-root vertex, then $\cdes(v)=\d(v)$ if and only if the labels of the edges incident to $v$ decrease when read counterclockwise starting from the largest label. 

Let $\U_n$ be the set of edge-labeled {\em unordered} rooted trees with $n$ edges. The difference with $\T_n$ is that, for trees in $\U_n$, the order of the children of a vertex is irrelevant; that is, trees are determined by their combinatorial structure and not by their particular embedding on the plane. It is known that $|\U_n|=(n+1)^{n-1}$. Indeed, transferring edge labels to vertex labels by moving each label to the endpoint away from the root, and labeling the root with $n+1$, trees in $\U_n$ are in bijection with vertex-labeled unordered unrooted trees on $n+1$ vertices (an example of this bijection appears on the left of Figure~\ref{fig:unordered}). By 
Cayley's formula, the number of such trees is $(n+1)^{n-1}$. Thus, it suffices to describe a bijection between $\T^{\max}_n$ and $\U_n$. 

Given a tree in $\T^{\max}_n$, the corresponding unordered tree is obtained simply by forgetting the order of the children of each vertex. Conversely, given a tree in $\U_n$, the unique tree in $\T^{\max}_n$ with the same combinatorial structure is obtained as follows. First, place the root and its children so that the labels of the corresponding edges decrease from left to right. Then, for each placed vertex $v$, recursively place its children in the only possible order that yields $\cdes(v)=\d(v)$. Specifically, writing $d=d(v)$, suppose that the labels between $v$ and its children are $a_1,a_2,\dots,a_d$ in increasing order, that the label between $v$ and its parent is $a$, and that $0\le i\le d$ is the index such that 
$$a_1<a_2<\dots<a_i<a<a_{i+1}<\dots<a_d.$$
Then place the children of $v$ so that the edge labels are $a_i,a_{i-1},\dots,a_1,a_d,a_{d-1},\dots,a_{i+1}$ from left to right.
This guarantees that $\cdes(aa_ia_{i-1}\dots a_1a_da_{d-1}\dots a_{i+1})=d$. An example of this bijection is shown on the right of Figure~\ref{fig:unordered}.
\end{proof}

\begin{figure}[htb]
\centering
\begin{tikzpicture}[scale=1.1]
     \draw[thick] (0,0) coordinate(d1) -- (1,0) coordinate(d4) -- (2,0) coordinate(d9) -- (3,0) coordinate(d3) -- (4,0) coordinate(d5) -- (5,0) coordinate(d8);
     \draw[thick] (d9) -- (2,-1) coordinate(d6); 
     \draw[thick] (3,1) coordinate(d2) -- (d3) -- (3,-1) coordinate(d7);
      \foreach \x in {1,...,9} {
        \draw[fill] (d\x) circle (2pt);
      }
      \foreach \x in {1,4,9,5,8} {
        \draw (d\x) node[above] {$\x$};
      }
      \foreach \x in {2,6,7} {
        \draw (d\x) node[right] {$\x$};
      }
      \foreach \x in {3} {
        \draw (d\x) node[above right] {$\x$};
      }
      \end{tikzpicture}
      \quad
 \begin{tikzpicture}[scale=1.1]
     \draw[thick] (1.5,3) coordinate(d0) -- (0,2) coordinate(d4) -- (0,1) coordinate(d1);
     \draw[thick] (d0) -- (1,2) coordinate(d6); 
     \draw[thick] (d0) -- (2.5,2) coordinate(d3) -- (1.5,1) coordinate(d7);
     \draw[thick] (d3) -- (2.5,1) coordinate(d5) -- (2.5,0) coordinate(d8);
     \draw[thick] (d3) -- (3.5,1) coordinate(d2);
      \foreach \x in {0,...,8} {
        \draw[fill] (d\x) circle (2pt);
      }
      \foreach \x in {4,6} {
        \draw (d\x)+(.45,.5) node {$\x$};
      }
      \foreach \x in {7} {
        \draw (d\x)+(.2,.5) node {$\x$};
      }
      \foreach \x in {3,2,5,8,1} {
        \draw (d\x)+(-.15,.5) node {$\x$};
      }
      \draw (1.5,3.5) node{$\U_n$};
      \end{tikzpicture}
      \quad
      \begin{tikzpicture}[scale=1.1]
     \draw[thick] (1.5,3) coordinate(d0) -- (1,2) coordinate(d4) -- (1,1) coordinate(d1);
     \draw[thick] (d0) -- (0,2) coordinate(d6); 
     \draw[thick] (d0) -- (2.5,2) coordinate(d3) -- (1.5,1) coordinate(d2);
     \draw[thick] (d3) -- (3.5,1) coordinate(d5) -- (3.5,0) coordinate(d8);
     \draw[thick] (d3) -- (2.5,1) coordinate(d7);
      \foreach \x in {0,...,8} {
        \draw[fill] (d\x) circle (2pt);
      }
      \foreach \x in {4,6} {
        \draw (d\x)+(.45,.5) node {$\x$};
      }
      \foreach \x in {2} {
        \draw (d\x)+(.2,.5) node {$\x$};
      }
      \foreach \x in {3,7,5,8,1} {
        \draw (d\x)+(-.15,.5) node {$\x$};
      }
      \draw (1.5,3.5) node{$\T^{\max}_n$};
      \end{tikzpicture}
      \caption{The bijections in the proof of Theorem~\ref{thm:desn}: a vertex-labeled unordered unrooted tree (left), its corresponding 
edge-labeled unordered rooted tree (center), and its corresponding 
edge-labeled plane rooted tree with maximum number of descents (right).}
      \label{fig:unordered}
\end{figure}
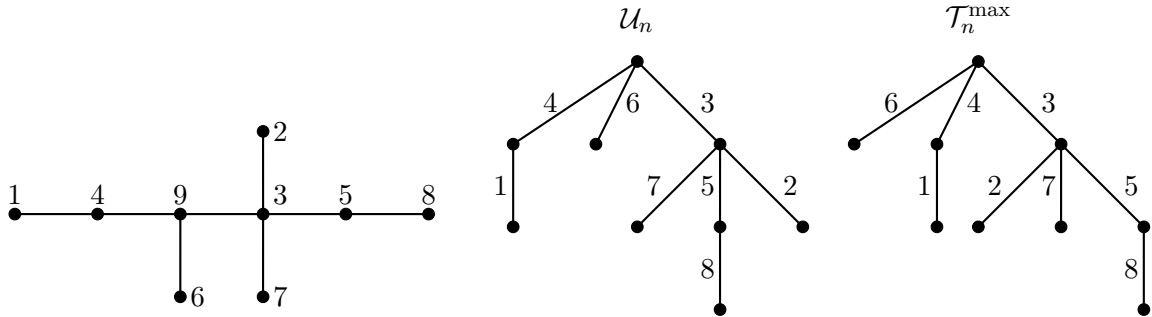

\subsection{A generating function for the number of descents}

Denote by $$A(t,z)=\sum_{n\ge0} A_n(t) \frac{z^n}{n!}$$
the exponential generating function (EGF for short) of the  Eulerian polynomials, defined in Equation~\eqref{eq:Eulerian}. It is well known \cite[Prop.\ 1.4.5]{stanley_enumerative_2012} that
\begin{equation}\label{eq:A_GF} A(t,z)=\frac{1-t}{1-te^{(1-t)z}}.
\end{equation}

In analogy to $A_n(t)$ and $Q_n(t)$, define the {\em quasi-Stirling polynomials}
\begin{equation}\label{eq:defoQn}
\oQ_n(t)=\sum_{\pi\in\QQ_n} t^{\des(\pi)},
\end{equation} 
and their EGF
$$\oQ(t,z)=\sum_{n\ge0}\oQ_n(t)\frac{z^n}{n!}.$$
The first few quasi-Stirling polynomials are
\begin{align*}
&\oQ_1(t)=t,\\ 
&\oQ_2(t)=t+3\,{t}^{2},\\
&\oQ_3(t)=t+13\,{t}^{2}+16\,{t}^{3}, \\
&\oQ_4(t)=t+39\,{t}^{2}+171\,{t}^{3}+125\,{t}^{4},\\
&\oQ_5(t)=t+101\,{t}^{2}+1091\,{t}^{3}+2551\,{t}^{4}+1296\,{t}^{5},\\
&\oQ_6(t)=t+243\,{t}^{2}+5498\,{t}^{3}+28838\,{t}^{4}+43653\,{t}^{5}+16807\,{t}^{6},\\
&\oQ_7(t)=t+561\,{t}^{2}+24270\,{t}^{3}+243790\,{t}^{4}+780585\,{t}^{5}+850809\,{t}^{6}+262144\,{t}^{7}.
\end{align*}

The main result in this section is an equation that describes $\overline{Q}(t,z)$, and allows us to compute the quasi-Stirling polynomials. The notation $[z^n]F(z)$ refers to the coefficient of $z^n$ in the generating function $F(z)$.

\begin{theorem}\label{thm:main}
The EGF of quasi-Stirling permutations by the number of descents satisfies the implicit equation $\oQ(t,z)=A(t,z\oQ(t,z))$, that is,
\begin{equation}\label{eq:oQ}
\oQ(t,z)=\frac{1-t}{1-te^{(1-t)z\oQ(t,z)}}.
\end{equation}
In particular, its coefficients satisfy
\begin{equation}\label{eq:Lagrange}
\oQ_n(t)=\frac{n!}{n+1}\,[z^n]A(t,z)^{n+1}.
\end{equation}
\end{theorem}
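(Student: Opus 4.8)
The plan is to work entirely on the tree side. By Lemma~\ref{lem:cdes} we have $\oQ(t,z)=\sum_{n\ge0}\bigl(\sum_{T\in\T_n}t^{\cdes(T)}\bigr)z^n/n!$, so it suffices to derive the functional equation for the generating function of plane trees counted by $\cdes$; the coefficient formula will then follow from Lagrange inversion. First I would pass to vertex labels exactly as in the proof of Theorem~\ref{thm:desn}: transferring each edge label to its endpoint away from the root and labelling the root $n+1$ turns a tree in $\T_n$ into a plane rooted tree on vertex set $[n+1]$ with the root labelled $n+1$. Writing $\cdes(v)=\cdes(\lambda(v)\,x_1\cdots x_k)$ for the cyclic descents of the word formed by the label $\lambda(v)$ of $v$ followed by the labels $x_1,\dots,x_k$ of its children read left to right, one checks that $\cdes(T)=\sum_v\cdes(v)$, and that at the root — whose label $n+1$ is the \emph{maximum} — the wrap-around supplies exactly the final descent required by our convention, so this cyclic count equals $\des(x_1\cdots x_k)$.

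The reason for insisting that the root carry the maximum label is that it makes the root decomposition clean: deleting the root yields, with \emph{no} correction term, $\cdes(T)=\des(r_1\cdots r_d)+\sum_{i=1}^d\cdes(T_i)$, where $T_1,\dots,T_d$ are the branches at the root in left-to-right order, $r_i$ is the label of the root of $T_i$, and each $\cdes(T_i)$ is the full (cyclic) statistic of $T_i$ viewed as a plane tree with an arbitrary root. Let $B(t,z)=\sum_m B_m(t)z^m/m!$ be the exponential generating function of these arbitrarily-rooted branches, weighted by $t^{\cdes}$ and counted by their number of vertices $m$ (the connecting edge being charged to the branch, so that $B(t,0)=0$). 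Since the weight $\des(r_1\cdots r_d)$ depends only on the relative order of the distinct roots $r_1,\dots,r_d$, the standard pointed-structure argument applies: choosing an unordered family of $d$ branches and then a permutation $\sigma\in\S_d$ with weight $t^{\des(\sigma)}$ to list them reproduces every left-to-right arrangement with its correct weight, because sorting the distinct roots shows that the number of descents of the listed roots equals $\des(\sigma)$. Summing over $d$ gives $\oQ(t,z)=\sum_{d\ge0}A_d(t)B(t,z)^d/d!=A\bigl(t,B(t,z)\bigr)$.

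The crux is then to prove that $B(t,z)=z\,\oQ(t,z)$, equivalently $B_m(t)=m\,\oQ_{m-1}(t)$. For this I would let $\mathbb Z/m$ act on plane rooted trees on $[m]$ by the cyclic relabelling $\psi\colon i\mapsto (i\bmod m)+1$, keeping the shape and the distinguished root untouched and only permuting the labels. The key observation is that $\psi$ \emph{preserves} $\cdes$: relabelling by $\psi$ reverses the descent/ascent status of precisely the (at most two) adjacencies involving the current maximum label $m$, and these occur in cancelling pairs — the pair $(m,\text{first child})$ and $(\text{last child},m)$ in the word at the vertex labelled $m$, together with the pair flanking $m$ in the word at its parent — so the net change is $0$. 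The action is free, since any nontrivial shift alters the labelling; hence its orbits all have size $m$, the statistic $\cdes$ is constant on each orbit, and each orbit contains exactly one tree whose root carries the label $m$. Summing over orbits gives $B_m(t)=m\,\oQ_{m-1}(t)$, i.e.\ $B=z\oQ$, so that $\oQ=A(t,z\oQ)$, which is \eqref{eq:oQ}. Finally, writing $w=z\oQ$ so that $z=w/A(t,w)$ and applying Lagrange inversion to $\oQ=A(t,w(z))$ yields $[z^n]\oQ=\tfrac1{n+1}[w^n]A(t,w)^{n+1}$, which is \eqref{eq:Lagrange}.

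I expect the main obstacle to be the $\cdes$-preserving symmetry in the third paragraph. The natural first guess, re-rooting a branch at its maximum or at another vertex, does \emph{not} preserve $\cdes$, so one must realize that the correct symmetry is the global cyclic relabelling $\psi$, and then verify the cancellation of the flipped adjacencies case by case (maximum being the root, an internal non-root vertex, or a leaf). By contrast, the vertex relabelling, the correction-free root decomposition enabled by rooting at the maximum, the Eulerian composition $\oQ=A(t,B)$, and the concluding Lagrange inversion are all comparatively routine.
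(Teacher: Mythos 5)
Your proposal is correct and follows essentially the same route as the paper: the same root decomposition into branches composed with the Eulerian EGF, the same key identity $B_m(t)=m\,\oQ_{m-1}(t)$ (the paper's Lemma~\ref{lem:T'}, where your $B_m$ is the paper's $R_m$) proved by the same cyclic relabelling that preserves $\cdes$, and the same Lagrange inversion at the end. The only differences are presentational --- you transfer edge labels to vertices and phrase the relabelling as a free $\mathbb{Z}/m$-action with max-rooted representatives, where the paper writes each single-child-rooted tree uniquely as $T^{|n}+i$.
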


Before proving this theorem, note that if we ignore descents by setting $t=1$, then $A(1,t)=\frac{1}{1-z}$. In this case, Theorem~\ref{thm:main} simply states that 
$$\oQ(1,z)=A(1,z\oQ(t,z))=\frac{1}{1-z\oQ(1,z)}.$$ It follows that 
\begin{equation}\label{eq:Catalan}
\oQ(1,z)=\frac{1-\sqrt{1-4z}}{2z},
\end{equation} 
the ordinary generating function for the Catalan numbers, and so $|\QQ_n|=n!C_n$, which agrees with Equation~\eqref{eq:nCat}.
Equation~\eqref{eq:Lagrange} in this case states that 
$$\oQ_n(1)=\frac{n!}{n+1}\,[x^{n}]\frac{1}{(1-x)^{n+1}}=\frac{n!}{n+1}\binom{2n}{n}=n!C_n.$$

It is also worth noting that a non-bijective proof of Theorem~\ref{thm:desn} can be deduced from Theorem~\ref{thm:main}, by noting that the number of $\pi\in\QQ_n$ with $n$ descents is
$$[t^n]\oQ_n(t)=\frac{n!}{n+1}\,[t^nz^n]A(t,z)^{n+1}=\frac{n!}{n+1}\,[t^nz^n]\left(\sum_{n\ge0}\frac{t^nz^n}{n!}\right)^{n+1}=\frac{n!}{n+1}\,[t^nz^n]e^{(n+1)tz}=(n+1)^{n-1}.$$

By Lemma~\ref{lem:cdes}, $\oQ(t,z)$ is also the EGF for edge-labeled plane rooted trees by the number of cyclic descents, that is, 
\begin{equation}\label{eq:oQT}
\oQ_n(t)=\sum_{T\in\T_n} t^{\cdes(T)} \quad \text{and}\quad \oQ(t,z)=\sum_{n\ge0} \sum_{T\in\T_n} t^{\cdes(T)} \frac{z^n}{n!}.
\end{equation}
For $n\ge1$, let $\T'_n\subseteq\T_n$ be the subset of trees whose root has exactly one child, and let 
\begin{equation}
\label{eq:Rn}
R_n(t)=\sum_{T'\in\T'_n} t^{\cdes(T')-1}.
\end{equation}
The next lemma will be used in the proof of Theorem~\ref{thm:main}. 

\begin{lemma}\label{lem:T'}
We have
$$\sum_{n\ge1} R_n(t) \frac{z^n}{n!}=z\,\oQ(t,z).$$
\end{lemma}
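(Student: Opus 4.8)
The plan is to pass to coefficients and reduce the claimed equality of EGFs to the polynomial identity $R_n(t)=n\,\oQ_{n-1}(t)$ for all $n\ge1$. Indeed, $[z^n]\bigl(z\,\oQ(t,z)\bigr)=\oQ_{n-1}(t)/(n-1)!$, while $[z^n]\sum_{m\ge1}R_m(t)\,z^m/m!=R_n(t)/n!$, so the two generating functions agree if and only if $R_n(t)=n\,\oQ_{n-1}(t)$. I would prove the latter using the tree interpretation of $\oQ_n$ from~\eqref{eq:oQT}.

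To analyze $\T'_n$, let $T'\in\T'_n$, with root $r$ joined to its unique child $c$ by an edge labeled $\ell\in[n]$; since $r$ has a single child, $\cdes(r)=\des(\ell)=1$. Deleting $r$ and this edge turns $c$ into the root of a tree on the label set $[n]\setminus\{\ell\}$, and flattening the labels to $[n-1]$ (order-preservingly) yields a tree $T\in\T_{n-1}$. Recording the pair $(\ell,T)$ gives a bijection $\T'_n\to[n]\times\T_{n-1}$. Every vertex other than $c$ contributes the same amount to $\cdes$ in $T'$ and in $T$; at $c$, writing $\sigma$ for the left-to-right sequence of labels of the edges joining $c$ to its children in $T'$, the contribution is $\cdes(\ell\sigma)$ in $T'$ (a non-root vertex with parent edge $\ell$) and $\des(\sigma)$ in $T$ (the root), the latter being unaffected by flattening. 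Hence
$$\cdes(T')-1=\cdes(T)-\des(\sigma)+\cdes(\ell\sigma).$$

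The key input is a local identity comparing cyclic and ordinary descents: for any finite set $B$ of distinct positive integers with $d=\card{B}$ and any $\ell\notin B$,
$$\sum_{\sigma}t^{\cdes(\ell\sigma)}=A_d(t),$$
the sum ranging over all linear orderings $\sigma$ of $B$; note that $\sum_\sigma t^{\des(\sigma)}=A_d(t)$ as well, by the definition of the Eulerian polynomials applied to the order-isomorphism $B\cong[d]$. I would prove this by a rotation argument. The map $\sigma\mapsto(\ell,\sigma_1,\dots,\sigma_d)$ identifies orderings of $B$ with cyclic arrangements of $B\cup\{\ell\}$, and $\cdes$ is invariant under cyclic rotation. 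Reading such a cyclic arrangement as a linear word $\tau_1\dots\tau_d$ starting immediately after its largest entry sends $\cdes$ to $\des(\tau)$, because the step into the maximum is never a descent while the step out of it always is; this is a bijection onto the orderings $\tau$ of the $d$-element set $(B\cup\{\ell\})\setminus\{\max\}$, over which $\sum_\tau t^{\des(\tau)}=A_d(t)$ once more.

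Finally I would assemble these facts. Grouping the trees $T\in\T_{n-1}$ by all of their structure except the left-to-right order $\sigma$ of the root's children, the quantity $\cdes(T)-\des(\sigma)$ is constant within each group, since permuting whole subtrees below the root changes only the root's own contribution $\des(\sigma)$. Summing the displayed relation over each group, the local factor $\sum_\sigma t^{\cdes(\ell\sigma)}$ may be replaced by $\sum_\sigma t^{\des(\sigma)}$ by the identity above, which recovers $\sum_{T}t^{\cdes(T)}=\oQ_{n-1}(t)$ for each fixed $\ell$; summing over the $n$ values of $\ell\in[n]$ then gives $R_n(t)=n\,\oQ_{n-1}(t)$. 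The main obstacle is precisely this cyclic-versus-linear descent identity: the naive cut map does not preserve the descent statistic term by term, because the boundary edges at $c$ interact with $\ell$ differently from how the last-position convention behaves, so the argument must pass through an equality of distributions rather than a descent-preserving bijection.
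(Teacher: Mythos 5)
Your proof is correct, but it takes a genuinely different route from the paper's. Both arguments reduce the lemma to the identity $R_n(t)=n\,\oQ_{n-1}(t)$, and both split $\T'_n$ into $n$ pieces equinumerous with $\T_{n-1}$; the difference is in how the statistic is tracked. The paper writes each $T'\in\T'_n$ uniquely as $T^{|n}+i$: it first attaches a root edge carrying the \emph{largest} label $n$, which increases $\cdes$ by exactly $1$ for every tree, and then adds $i$ modulo $n$ to all labels, an operation it checks preserves $\cdes$ term by term; so the fibers are $\cdes$-constant and the identity is immediate. You instead let the root edge carry an arbitrary label $\ell$ and delete it, which destroys the statistic pointwise (at the child $c$ the contribution changes from $\cdes(\ell\sigma)$ to $\des(\sigma)$), so you must recover equality only in distribution, via the local identity $\sum_\sigma t^{\cdes(\ell\sigma)}=A_d(t)=\sum_\sigma t^{\des(\sigma)}$, proved by cutting the cyclic word after its maximum, together with the regrouping of $\T_{n-1}$ by the left-to-right order of the root's subtrees. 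The two mechanisms are the global and local faces of the same phenomenon: the paper's rotation invariance of $\cdes$ is exactly what lets it keep the maximum on the root edge, while your Eulerian identity localizes that rotation to the single affected vertex. The paper's version is shorter and term-by-term; yours is distributional rather than bijective at the level of individual trees, but it isolates a reusable fact (the equidistribution of $\cdes$ on cyclic words of length $d+1$ with $\des$ on $\S_d$, under the paper's convention that the last position is always a descent), and all the steps --- the bijection $\T'_n\to[n]\times\T_{n-1}$, the bookkeeping $\cdes(T')-1=\cdes(T)-\des(\sigma)+\cdes(\ell\sigma)$, and the grouping argument --- check out, including the degenerate cases $d=0$ and $n=1$.
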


\begin{proof}
We define two operations on trees. For $T\in\T_{n-1}$, let $T^{|n}\in\T'_n$ be the tree obtained from $T$ by attaching an edge with label $n$ from the root of $T$ to a new root vertex. For $T'\in\T'_n$ and $i\in\{0,1,\dots,n-1\}$, let $T'+i\in\T'_n$ be tree obtained by adding $i$ modulo $n$ to each label of $T'$, so that the resulting labels are again the numbers $1,2,\dots,n$.

Next we analyze how the statistic $\cdes$ behaves under these two operations. We have that $\cdes(T^{|n})=\cdes(T)+1$ for all $T\in\T_{n-1}$, since $\cdes(v)$ stays the same for each vertex $v$ of $T$, and the new root contributes one cyclic descent. On the other hand, $\cdes(T'+i)=\cdes(T')$ for all $T'\in\T'_n$ because, when adding $1$ modulo $n$ to each label, the relative order of the labels around a vertex $v$ does not change unless $v$ is an endpoint of the edge with label $n$ in $T'$, which has label $1$ in $T'+1$. But, even for such $v$, the value of $\cdes(v)$ does not change, because when reading the labels around $v$ in counterclockwise order, the old label $n$ was bigger than the next label, whereas the new label $1$ is smaller than the previous label.

Since every tree in $T'\in\T'_n$ can be obtained as $T'=T^{|n}+i$ for a unique $i\in\{0,1,\dots,n-1\}$ and a unique $T\in\T_{n-1}$, we have
\begin{align*} R_n(t)&=\sum_{T'\in\T'_n} t^{\cdes(T')-1}=\sum_{T\in\T_{n-1}} \sum_{i=0}^{n-1} t^{\cdes(T^{|n}+i)-1}=\sum_{T\in\T_{n-1}} n t^{\cdes(T^{|n})-1}
=n \sum_{T\in\T_{n-1}} t^{\cdes(T)}\\
&=n\oQ_{n-1}(t)
\end{align*}
for every $n\ge1$, where we used Equation~\eqref{eq:oQT} in the last equality. Multiplying both sides by $z^n/n!$ and summing over $n$, we get
$$\sum_{n\ge1} R_n(t) \frac{z^n}{n!}=\sum_{n\ge1} \oQ_{n-1}(t) \frac{z^n}{(n-1)!}=z\,\oQ(t,z).$$
\end{proof}

\begin{proof}[Proof of Theorem~\ref{thm:main}]
We consider a recursive description of edge-labeled plane rooted trees.
Let $\T=\bigcup_{n\ge0}\T_n$ and $\T'=\bigcup_{n\ge1}\T'_n$. We think of these sets as labeled combinatorial classes in the sense of Flajolet and Sedgewick~\cite{flajolet_analytic_2009}. The EGFs for these classes with a variable $t$ keeping track of the number of cyclic descents are $\oQ(t,z)$ for $\T$, by Equation~\eqref{eq:oQT}, and $tz\oQ(t,z)$ for $\T'$, by Lemma~\ref{lem:T'}. 

Every tree in $\T$ can be decomposed as a sequence of trees in $\T'$ with a common root. Let us first consider the simplified version where we
momentarily disregard  the parameter $\cdes$. Recall from~\cite{flajolet_analytic_2009} that if $F(z)$ is the EGF for a labeled class, then the EGF for sequences of objects in that class is $\frac{1}{1-F(z)}$. Thus, the decomposition of trees in $\T$ as sequences of trees in $\T'$ yields the equation
$$\oQ(1,z)=\frac{1}{1-z\oQ(1,z)}.$$

Next we incorporate the parameter $\cdes$, by analyzing how it behaves under this decomposition. 
Suppose that $T\in\T$ is obtained by attaching $r$ non-empty trees $T'_1,T'_2,\dots,T'_r\in\T'$ to a common root, denoted by $v_0$, and relabeling their edges with distinct labels from $1$ up to the total number of edges, so that the relative order of the labels within each tree is preserved. This relabeling does not change the value of $\cdes(v)$ at any vertex $v$ other than $v_0$. However, whereas the root of each $T'_i$ contributed $1$ to its number of cyclic descents, the contribution to $\cdes(T)$ of the common root $v_0$ equals the number of descents of the sequence of labels in $T$ of the edges incident to this vertex. Specifically, if the edges incident to $v_0$ in $T$ have labels $a_1,a_2,\dots,a_r$ from left to right, then
$$\cdes(T)=\sum_{i=1}^r(\cdes(T'_i)-1)+\des(a_1a_2\dots a_r).$$

Since the labels of these $r$ subtrees of $T$ form a partition of $[n]$, and the order in which these trees are attached to $v_0$ can be any of the $r!$ permutations, it follows that
$$\sum_{T\in\T_n} t^{\cdes(T)} = \sum_{\substack{\{B_1,B_2,\dots,B_r\}\\ \text{partition of $[n]$}}} \left(\sum_{T'_1\in\T'_{|B_1|}} t^{\cdes(T'_1)-1}\right)\cdots\left(\sum_{T'_r\in\T'_{|B_r|}} t^{\cdes(T'_r)-1}\right)\left(\sum_{\pi\in\S_r} t^{\des(\pi)}\right),$$
where the first sum on the right-hand side is over all partitions of $[n]$, not just those with a fixed number of blocks.
Using the notation $R_n(t)$ from Equation~\eqref{eq:Rn}, we can rewrite this equation as
\begin{align*}\oQ_n(t)&=\sum_{\substack{\{B_1,B_2,\dots,B_r\}\\ \text{partition of $[n]$}}} R_{|B_1|}(t)  \cdots R_{|B_r|}(t) A_r(t)\\
&=\sum_{r=1}^n\frac{1}{r!}\sum_{\substack{b_1+\dots+b_r=n \\ b_1,\dots,b_r\ge1}}\binom{n}{b_1,\dots,b_r} R_{b_1}(t)  \cdots R_{b_r}(t) A_r(t).
\end{align*}
To turn this equality into an equation for EGFs, we use a bivariate version of the Compositional Formula (see \cite[Thm.\ 5.1.4]{stanley_enumerative_1999}). 
Explicitly, we multiply both sides by $z^n/n!$, sum over $n\ge0$, and apply Lemma~\ref{lem:T'}:
\begin{align*}\oQ(t,z)=\sum_{n\ge0}\oQ_n(t)\frac{z^n}{n!}&=1+\sum_{n\ge1}\sum_{r=1}^n\frac{1}{r!}\sum_{\substack{b_1+\dots+b_r=n\\ b_1,\dots,b_r\ge1}}
\frac{1}{b_1!\cdots b_r!} R_{b_1}(t) \cdots R_{b_r}(t) A_r(t) z^n\\
&=1+\sum_{r\ge1}\frac{1}{r!}\sum_{b_1,\dots,b_r\ge1} R_{b_1}(t)\frac{z^{b_1}}{b_1!}  \cdots R_{b_r}(t)\frac{z^{b_r}}{b_r!} A_r(t)\\
&=1+\sum_{r\ge1} \frac{1}{r!}(z\oQ(t,z))^r A_r(t) \\
&=A(t,z\oQ(t,z)),
\end{align*}
proving Equation~\eqref{eq:oQ}.

Next, we extract the coefficients of $\oQ(t,z)$. The generating function $F(t,z):=z\oQ(t,z)$ satisfies the equation
$$F(t,z)=zA(t,F(t,z)).$$
Thus, by the Lagrange's inversion formula (see for example \cite[Thm.\ 5.4.2]{stanley_enumerative_1999}), we have
$$[z^n]F(t,z)=\frac{1}{n}\,[z^{n-1}]A(t,z)^n,$$
so
$$\oQ_n(t)=n!\,[z^n]\oQ(t,z)=n!\,[z^{n+1}]F(t,z)=\frac{n!}{n+1}\,[z^{n}]A(t,z)^{n+1}.$$
\end{proof}

Gessel and Stanley's main result from~\cite{gessel_stirling_1978} (stated above as Theorem~\ref{thm:GS}) is the analogue for Stirling polynomials 
of Equation~\eqref{eq:Eulerian} for Eulerian polynomials.
As a consequence of Theorem~\ref{thm:main}, we obtain the following analogue for quasi-Stirling polynomials of these two results.

\begin{theorem}\label{thm:QQn}
$$\sum_{m\ge 0} \frac{m^n}{n+1}\binom{m+n}{m}\, t^m=\frac{\oQ_n(t)}{(1-t)^{2n+1}}.$$
\end{theorem}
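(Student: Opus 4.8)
The plan is to derive the identity by combining the known EGF formula~\eqref{eq:A_GF} for Eulerian polynomials with the Lagrange inversion result~\eqref{eq:Lagrange} from Theorem~\ref{thm:main}, specializing the generating-function manipulation that led to~\eqref{eq:Eulerian} in the classical case. The target equation asserts that $\oQ_n(t)/(1-t)^{2n+1}$ is the ordinary generating function in $m$ of the coefficients $\frac{m^n}{n+1}\binom{m+n}{m}$, so the cleanest route is to show that both sides, viewed as formal power series in $t$, have the same coefficients, using~\eqref{eq:Lagrange} to unpack $\oQ_n(t)$.

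First I would recall the classical starting point: from~\eqref{eq:Eulerian} we have $A_n(t) = (1-t)^{n+1}\sum_{m\ge 0} m^n t^m$, or equivalently $\frac{A_n(t)}{(1-t)^{n+1}} = \sum_{m\ge0} m^n t^m$. Translating this into the EGF language, the formula~\eqref{eq:A_GF} gives $A(t,z) = \frac{1-t}{1-te^{(1-t)z}}$, and I would want an expansion of a power $A(t,z)^{n+1}$ that exposes its $z^n$ coefficient. The key observation is that $A(t,z)^{n+1} = (1-t)^{n+1}\bigl(1-te^{(1-t)z}\bigr)^{-(n+1)}$, and expanding the negative binomial power as $\sum_{m\ge0}\binom{m+n}{m} t^m e^{(1-t)mz}$ lets me read off $[z^n]A(t,z)^{n+1}$ directly. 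Specifically, $[z^n]e^{(1-t)mz} = \frac{(1-t)^n m^n}{n!}$, so
\begin{equation*}
[z^n]A(t,z)^{n+1} = (1-t)^{n+1}\sum_{m\ge0}\binom{m+n}{m} t^m \frac{(1-t)^n m^n}{n!} = \frac{(1-t)^{2n+1}}{n!}\sum_{m\ge0}\binom{m+n}{m} m^n t^m.
\end{equation*}

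Plugging this into Equation~\eqref{eq:Lagrange}, namely $\oQ_n(t) = \frac{n!}{n+1}[z^n]A(t,z)^{n+1}$, the factorials cancel and I obtain $\oQ_n(t) = \frac{(1-t)^{2n+1}}{n+1}\sum_{m\ge0}\binom{m+n}{m} m^n t^m$, which is exactly the claimed identity after dividing both sides by $(1-t)^{2n+1}$. The argument is almost entirely a bookkeeping computation once the negative-binomial expansion of $(1-te^{(1-t)z})^{-(n+1)}$ is in hand, so the only real obstacle is justifying the interchange of the sum over $m$ with coefficient extraction in $z$ and ensuring the series manipulations are valid. Since everything can be carried out in the ring of formal power series in $z$ whose coefficients are rational functions of $t$ (or formal power series in $t$), convergence is not an issue and the interchange is formal; I expect this to be the most delicate point to phrase carefully but not genuinely hard. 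I would conclude by noting that this mirrors exactly the passage from~\eqref{eq:A_GF} to~\eqref{eq:Eulerian}, with the extra factor $\binom{m+n}{m}$ and the shift to $(1-t)^{2n+1}$ arising precisely from raising $A(t,z)$ to the power $n+1$.
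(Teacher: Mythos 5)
Your proposal is correct and follows essentially the same route as the paper: both apply Equation~\eqref{eq:Lagrange}, expand $\bigl(1-te^{(1-t)z}\bigr)^{-(n+1)}$ as $\sum_{m\ge0}\binom{m+n}{m}t^m e^{m(1-t)z}$, and extract the coefficient of $z^n$ to obtain $\oQ_n(t)=\frac{(1-t)^{2n+1}}{n+1}\sum_{m\ge0}\binom{m+n}{m}m^n t^m$. The formal-power-series justification you flag as the delicate point is indeed routine and the paper does not dwell on it either.
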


\begin{proof}
We use Equation~\eqref{eq:Lagrange} and extract the coefficient of $z^n$ in $A(t,z)^{n+1}$. By Equation~\eqref{eq:A_GF}, this expression equals
$$\left(\frac{1}{1-te^{(1-t)z}}\right)^{n+1}=\sum_{m\ge0} \binom{m+n}{m}t^m e^{m(1-t)z},$$
and so
$$[z^n]\left(\frac{1}{1-te^{(1-t)z}}\right)^{n+1}=\sum_{m\ge0} \binom{m+n}{m} \frac{t^m m^n(1-t)^n}{n!}.$$
Thus, by Equation~\eqref{eq:Lagrange},
$$\oQ_n(t)=\frac{n!}{n+1}[z^n]\left(\frac{1-t}{1-te^{(1-t)z}}\right)^{n+1}=\frac{(1-t)^{2n+1}}{n+1}\sum_{m\ge0} \binom{m+n}{m} m^n t^m,$$
which is equivalent to the stated formula.
\end{proof}

\section{Properties of quasi-Stirling polynomials}\label{sec:properties}

In the section, we prove some properties of the distribution of the number of descents ---as well as the number of ascents and the number of plateaus--- on quasi-Stirling permutations, in analogy with B\'ona's results for Stirling permutations~\cite{bona_real_2008}.

By symmetry, the number of ascents and the number of descents are equidistributed on $\QQ_n$, since reversing a permutation in $\QQ_n$ turns ascents into descents and vice versa. 

Using Equation~\eqref{eq:lea}, it is shown by Archer et al.~\cite{archer_pattern_2019} that the number of elements in $\QQ_n$ with $m$ plateaus is
$$n!N(n,m)=(n-1)!\binom{n}{m}\binom{n}{m-1},$$
where the Narayana number $N(n,m)$ is the number of unlabeled plane rooted trees with $n$ edges and $m$ leaves. The corresponding generating function is
\begin{equation}\label{eq:Narayana}
\sum_{n\ge0} \sum_{\pi\in\QQ_n} u^{\plat(\pi)} \frac{z^n}{n!}
=1+\sum_{n,m\ge1} N(n,m) u^m z^n=\frac{1-(u-1)z-\sqrt{1-2(1+u)z+(1-u)^2z^2}}{2z}.
\end{equation}

\subsection{Average number of ascents, descents, and plateaus}

B\'ona proves in~\cite[Cor.\ 1]{bona_real_2008} that Stirling permutations in $\Q_n$ have, on average, $(2n+1)/3$ ascents, $(2n+1)/3$ descents, and $(2n+1)/3$ plateaus. From Theorem~\ref{thm:main}, we can derive the following analogue for quasi-Stirling permutations.

\begin{corollary}\label{cor:average}
Let $n\ge1$. On average, elements of $\QQ_n$ have $(3n+1)/4$ ascents, $(3n+1)/4$ descents, and $(n+1)/2$ plateaus.
\end{corollary}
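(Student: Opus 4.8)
The plan is to obtain all three averages at once by reducing the ascent and descent averages to the plateau average, and then reading the plateau average off Equation~\eqref{eq:Narayana}. For any statistic $\mathrm{st}$, its average over $\QQ_n$ is $\frac{1}{|\QQ_n|}\sum_{\pi\in\QQ_n}\mathrm{st}(\pi)$, which equals the logarithmic derivative at $t=1$ (resp.\ $u=1$) of the corresponding generating polynomial; here $|\QQ_n|=n!\,C_n$ by Equation~\eqref{eq:nCat}. I write $E[\,\cdot\,]$ for this average.

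First I would record an elementary pointwise identity. Fix $\pi=\pi_1\pi_2\dots\pi_{2n}\in\QQ_n$. Each of the $2n-1$ internal gaps $i\in[2n-1]$ is exactly one of an ascent ($\pi_i<\pi_{i+1}$), a plateau ($\pi_i=\pi_{i+1}$), or a descent ($\pi_i>\pi_{i+1}$); together with the conventions of this paper that position $0$ is always an ascent and position $2n$ is always a descent, this gives
\[
\asc(\pi)+\des(\pi)+\plat(\pi)=(2n-1)+2=2n+1
\]
for every $\pi\in\QQ_n$. Averaging over $\QQ_n$ yields $E[\asc]+E[\des]+E[\plat]=2n+1$, and since reversal interchanges ascents and descents (so $E[\asc]=E[\des]$, as noted at the start of this section), we get $2\,E[\des]+E[\plat]=2n+1$. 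Thus both the ascent and descent averages are pinned down by the plateau average: once $E[\plat]=(n+1)/2$ is known, $E[\des]=E[\asc]=\frac{2n+1-(n+1)/2}{2}=\frac{3n+1}{4}$.

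It remains to compute $E[\plat]$ from Equation~\eqref{eq:Narayana}. Writing $P(u,z)$ for the closed form on its right-hand side, I would differentiate in $u$ and set $u=1$; the radicand then specializes to $1-4z$, and after simplification $\partial_u P(u,z)\big|_{u=1}=\tfrac12\bigl((1-4z)^{-1/2}-1\bigr)$. Using $(1-4z)^{-1/2}=\sum_{n\ge0}\binom{2n}{n}z^n$, the coefficient of $z^n$ is $\tfrac12\binom{2n}{n}$ for $n\ge1$, while $[z^n]P(1,z)=C_n$. Because both quantities are read off the same exponential generating function, the factors of $n!$ cancel and $E[\plat]=\tfrac12\binom{2n}{n}/C_n=(n+1)/2$, which finishes the argument.

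None of these steps is genuinely hard; the only point requiring care is the boundary bookkeeping in the pointwise identity, namely correctly accounting for the forced ascent at position $0$ and the forced descent at position $2n$ under this paper's descent convention. As an alternative that invokes Theorem~\ref{thm:main} directly, one can instead get the descent average by differentiating $\oQ(t,z)=A(t,z\oQ(t,z))$ in $t$ at $t=1$, solving the resulting linear equation for $\partial_t\oQ(t,z)\big|_{t=1}$, and simplifying with the Catalan identity $\oQ(1,z)=1+z\,\oQ(1,z)^2$ from Equation~\eqref{eq:Catalan}; this produces a generating function whose coefficient of $z^n$ is $\tfrac{3n+1}{4}C_n$, and in that route the algebraic simplification is the only mildly delicate step.
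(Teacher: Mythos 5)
Your argument is correct, but it runs in the opposite direction from the paper's. The paper differentiates the implicit equation~\eqref{eq:oQ} from Theorem~\ref{thm:main} with respect to $t$ at $t=1$, solves for $\frac{\partial \oQ}{\partial t}(1,z)$ in closed form, and extracts coefficients to get the descent average $(3n+1)/4$ directly; it then obtains the plateau average from the same pointwise identity $\asc(\pi)+\des(\pi)+\plat(\pi)=2n+1$ and reversal symmetry that you invoke (and it remarks, as you do, that the plateau average could instead be read off~\eqref{eq:Narayana}). You go the other way: you start from~\eqref{eq:Narayana} --- which is Archer et al.'s result rather than the paper's new machinery --- compute the plateau average $(n+1)/2$ by a single explicit differentiation of an algebraic function, and then solve $2E[\des]+E[\plat]=2n+1$ for the descent and ascent averages. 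Your computation checks out ($\partial_u P(u,z)\big|_{u=1}=\tfrac12\bigl((1-4z)^{-1/2}-1\bigr)$ is right, as is the boundary bookkeeping: position $0$ is always an ascent, position $2n$ always a descent, and each internal gap is exactly one of the three), and your route is computationally lighter since it avoids implicit differentiation of the functional equation. What the paper's heavier computation buys is the closed form~\eqref{eq:oQ'} for $\frac{\partial \oQ}{\partial t}(1,z)$, which is reused later in the variance computation for the asymptotic normality theorem, and it showcases that the descent average follows from Theorem~\ref{thm:main} alone; your shortcut yields the corollary but not that byproduct. The alternative you sketch at the end is essentially the paper's actual proof.
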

 
\begin{proof}
By Equation~\eqref{eq:defoQn}, 
$$\frac{\oQ_n'(1)}{|\QQ_n|}=\frac{\sum_{\pi\in\QQ_n} \des(\pi)}{|\QQ_n|}$$
is the average number of descents in elements of $\QQ_n$.

Differentiating Equation~\eqref{eq:oQ} with respect to $t$, setting $t=1$, and solving for $\frac{\partial \oQ}{\partial t}(1,z)$, we get
\begin{equation}\label{eq:oQ'}
\frac{\partial \oQ}{\partial t}(1,z)=\frac{z\oQ(1,z)(2-z\oQ(1,z))}{2(1-z-2z\oQ(1,z)+z^2\oQ(1,z)^2)}=\frac{1}{4z}\left(\frac{1-z}{\sqrt{1-4z}}-1+z\right),
\end{equation}
where we have used Equation~\eqref{eq:Catalan} in the last equality.

Extracting the coefficient of $z^n$ for $n\ge1$ on both sides of~\eqref{eq:oQ'},
$$\frac{\oQ_n'(1)}{n!}=\frac{1}{4}\,[z^{n+1}]\frac{1-z}{\sqrt{1-4z}}=\frac{1}{4}\left(\binom{2n+2}{n+1}-\binom{2n}{n}\right)=\frac{3n+1}{4(n+1)}\binom{2n}{n}.$$
Dividing by $|\QQ_n|/n!=C_n$, we conclude that
$$\frac{\oQ_n'(1)}{|\QQ_n|}=\frac{3n+1}{4}.$$

The average number of descents in elements of $\QQ_n$ equals the average number of ascents, by symmetry. Additionally, the sum of the numbers of ascents, descents and plateaus of any given $\pi\in\QQ_n$ is $2n+1$, since every $i\in\{0,1,\dots,2n\}$ is an ascent, a descent or a plateau of $\pi$.  It follows that the average number of plateaus is
$$2n+1-2\cdot\frac{3n+1}{4}=\frac{n+1}{2}.$$
Alternatively, this average can be deduced directly from the generating function in Equation~\eqref{eq:Narayana}.
\end{proof}

\subsection{Real roots of quasi-Stirling polynomials and $r$-Eulerian polynomials}

It is well-known result of Frobenius that the roots of the Eulerian polynomials $A_n(t)$ are real, distinct, and nonpositive. In~\cite[Thm.\ 1]{bona_real_2008}, B\'ona proves the analogous result for the Stirling polynomials $Q_n(t)$, although their real-rootedness had already been shown by Brenti~\cite[Thm.\ 6.6.3]{brenti_unimodal_1989} in more generality. In this subsection, we prove that quasi-Stirling polynomials $\oQ_n(t)$ also have this property. Unlike the proofs for $A_n(t)$ and $Q_n(t)$ that give direct recurrences for these polynomials, our proof relates $\oQ_n(t)$ to the so-called $r$-Eulerian polynomials.

For $r\ge1$, define the number of $r$-excedances of a sequence $\pi=\pi_1\pi_2\dots\pi_s$ to be
$$\exc_r(\pi)=\{i\in[s]:\pi_i\ge i+r\}.$$
In particular, we write $\exc(\pi)=\exc_1(\pi)$ to denote the number of excedances in the usual sense.

Riordan~\cite{riordan_introduction_1958}, and later Foata and Sch\"utzenberger~\cite{foata_theorie_1970}, defined the polynomials 
$$A_{n,r}(t)=\sum_{\pi\in\S_n} t^{\exc_r(\pi)}.$$
For $r=1$, we have $A_{n,1}(t)=A_n(t)/t$, by the well-known fact 
(see \cite{foata_theorie_1970} or \cite[Prop.\ 1.4.3]{stanley_enumerative_2012}) that the number of excedances in $\S_n$ is equidistributed with the number of descents, if we do not consider the last position $n$ to be a descent.
Let $\In_{n,r}$ denote the set of injections $\pi:[n-r]\to[n]$. Identifying such an injection with the sequence $\pi=\pi_1\pi_2\dots\pi_{n-r}$ of its images, define the polynomials
$$J_{n,r}(t)=\sum_{\pi\in\In_{n,r}} t^{\exc(\pi)}.$$
For small values of $r$, these polynomials appear in \cite[A144696--A144699]{sloane_-line_nodate}. Adapting the notation, it is shown in~\cite{riordan_introduction_1958,foata_theorie_1970} that, for $r\ge1$,
\begin{equation}\label{eq:IA}
J_{n,r}(t)=\frac{t^{n-r}\,A_{n,r}(1/t)}{r!}
\end{equation}
(in particular, $J_{n,1}(t)=A_{n,1}(t)=A_n(t)/t$) and that 
$$\sum_{m\ge1} t\,J_{m+r-1,r}(t)\,\frac{z^m}{m!}=\frac{A(t,z)^r}{r}.$$

Setting $r=n+1$ and taking the coefficient of $z^n$, it follows from Theorem~\ref{thm:main} that
\begin{equation}\label{eq:QI}
\oQ_n(t)=t\,J_{2n,n+1}(t).
\end{equation}

We are now ready to prove the real-rootedness of quasi-Stirling and $r$-Eulerian polynomials.

\begin{theorem}\label{thm:roots}
For every $1\le r\le n$, each of the polynomials $A_{n,r}(t)$, $J_{n,r}(t)$ and $\oQ_n(t)$ has real, distinct, and nonpositive roots.
\end{theorem}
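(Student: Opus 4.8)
The plan is to prove real-rootedness for all three families simultaneously, exploiting the relations \eqref{eq:IA} and \eqref{eq:QI} that tie them together, so that establishing the property for one family transfers it to the others. The natural candidate to attack first is the injection polynomial $J_{n,r}(t)$, because the set $\In_{n,r}$ of injections $[n-r]\to[n]$ admits a clean recursive structure: an injection counted by $J_{n,r}$ can be built from one counted by $J_{n-1,r}$ (or $J_{n-1,r-1}$) by deciding where the value $n$, or the new domain element, is inserted. First I would set up a recurrence expressing $J_{n,r}(t)$ in terms of $J_{n-1,r}(t)$ and $J_{n-1,r-1}(t)$ (equivalently a recurrence among the $A_{n,r}(t)$ via \eqref{eq:IA}), tracking how inserting an element changes the number of excedances.

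The key tool for turning such a recurrence into a real-rootedness statement is the theory of \emph{interlacing} (compatible) polynomials. Recall that two real-rooted polynomials $f$ and $g$ with negative roots are said to interlace if their roots alternate on the negative real axis; a standard lemma (used in exactly this context by B\'ona and by Brenti) states that if $f$ and $g$ are real-rooted with a common interlacing and have nonnegative coefficients, then any nonnegative linear combination $\alpha f+\beta g$, and more importantly the polynomial $t\,g + (\text{derivative-type terms})$ arising from such recurrences, is again real-rooted, and moreover preserves the interlacing relation. So the core of the argument would be to prove by induction on $n$ (and on $r$) the stronger statement that the relevant pair of polynomials, say $J_{n,r}(t)$ and $J_{n,r-1}(t)$ (or $J_{n,r}$ and $J_{n-1,r}$), form an interlacing pair. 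The strengthened inductive hypothesis is what makes the induction go through, since real-rootedness alone is not preserved under the operations in the recurrence, but interlacing is.

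Once $J_{n,r}(t)$ is shown to have real, distinct, nonpositive roots for all $1\le r\le n$, the statement for $A_{n,r}(t)$ follows immediately from Equation~\eqref{eq:IA}: replacing $t$ by $1/t$ and multiplying by a power of $t$ sends real negative roots to real negative roots and preserves distinctness, so $A_{n,r}(t)=r!\,t^{n-r}J_{n,r}(1/t)$ inherits the property. Likewise, the statement for the quasi-Stirling polynomials is read off from Equation~\eqref{eq:QI}, $\oQ_n(t)=t\,J_{2n,n+1}(t)$, since multiplying by $t$ merely adjoins the root $t=0$, which is real, nonpositive, and distinct from the others (the remaining roots being strictly negative). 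The factor of $t$ accounts for the fact that every $\pi\in\QQ_n$ has at least one descent, namely at position $2n$.

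I expect the main obstacle to be establishing the correct recurrence for $J_{n,r}(t)$ and identifying the precise interlacing pair that is preserved. The excedance statistic on injections is more delicate than the descent statistic on genuine permutations, because an injection $[n-r]\to[n]$ omits $r$ values, and inserting a new element can shift the indices and hence change several excedance comparisons at once; care is needed to verify that the recurrence has the specific shape (a combination of $g$, $t\,g$, and possibly $t\,g'$-type terms) for which the interlacing lemma applies. A secondary technical point is handling the base cases and the boundary regime $r=n$ (where $\In_{n,n}$ consists of the single empty injection) and checking that distinctness of roots—not merely real-rootedness—is maintained, which typically requires the strict version of the interlacing lemma.
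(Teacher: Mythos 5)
Your overall architecture coincides with the paper's: establish real-rootedness for the injection polynomials by induction on a recurrence, then transfer the property to $A_{n,r}(t)$ via the reciprocal-root relation \eqref{eq:IA} and to $\oQ_n(t)$ via \eqref{eq:QI}, the factor of $t$ merely adjoining the root $0$. The load-bearing step you leave open, however, is the recurrence itself, and the form you propose first is not the one that does the work. The paper does not use a recurrence mixing $J_{n-1,r}$ and $J_{n-1,r-1}$; instead it sets $p_{n,r}(t):=t\,J_{n,r}(t)$ and invokes Riordan's recurrence $A_{n,r}(t)=(r+(n-r)t)A_{n-1,r}(t)+t(1-t)A'_{n-1,r}(t)$, which under \eqref{eq:IA} becomes
$$p_{n,r}(t)=n\,t\,p_{n-1,r}(t)+t(1-t)\,p_{n-1,r}'(t),\qquad p_{r,r}(t)=t,$$
a recurrence entirely within fixed $r$ and of exactly the derivative type ($t\,g$ plus $t(1-t)g'$) that you mention only as a fallback. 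With that recurrence in hand, no interlacing lemma for pairs across different parameters is needed: the paper runs the classical sign-alternation argument directly --- the quantity $n\,p_{n-1,r}(x_i)+(1-x_i)\,p_{n-1,r}'(x_i)$ alternates in sign at the roots $x_1<\dots<x_{n-r}=0$ of $p_{n-1,r}$, producing $n-r-1$ interlaced roots, to which one adds the root at $0$ coming from the factor $t$ and one further root below $x_1$ obtained by comparing the sign at $x_1$ with the sign at $-\infty$; this also yields distinctness for free. So your plan is viable in outline, but to complete it you must actually prove (or cite) the Riordan recurrence; the bookkeeping of how inserting the new value changes the excedance count is precisely where the derivative term arises, and that is the step your writeup defers to future work rather than carrying out.
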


\begin{proof}
We will prove that the polynomials
\begin{equation}\label{eq:def_pnr}
p_{n,r}(t):=t\,J_{n,r}(t)
\end{equation}
have real, distinct, and nonpositive roots.
The statement for $J_{n,r}(t)$ will then follow immediately, as well as for $A_{n,r}(t)$ because, by Equation~\eqref{eq:IA}, its roots are the reciprocals of the roots of $J_{n,r}(t)$. The statement for the polynomials $\oQ_n(t)$ will follow from Equation~\eqref{eq:QI}.

First we claim that, for any fixed $r\ge1$, the polynomials $p_{n,r}(t)$ satisfy the recurrence
\begin{equation}\label{eq:recp}
p_{n,r}(t)=n\,t\,p_{{n-1},r}(t)+t(1-t)\,p_{n-1,r}'(t)
\end{equation}
for $n>r$, with initial condition $p_{r,r}(t)=t$.
Indeed, this is a direct translation, using Equations~\eqref{eq:IA} and~\eqref{eq:def_pnr}, of the recurrence for $A_{n,r}(t)$ proved in \cite[p.\ 214]{riordan_introduction_1958}:
$$A_{n,r}(t)=(r+(n-r)t)A_{n-1,r}(t)+t(1-t)A'_{n-1,r}(t).$$
Note that, aside from the initial condition, recurrence~\eqref{eq:recp} does not depend on $r$, and it extends the well-known recurrence satisfied by the Eulerian polynomials $A_n(t)=p_{n,1}(t)$.

By definition, $p_{n,r}(t)$ is a polynomial of degree $n-r+1$ with a positive leading coefficient, and $0$ is one of its roots.
Next we show by induction on $n$ that $p_{n,r}(t)$ has $n-r+1$ real, distinct, and nonpositive roots. This is trivially true for base case $n=r$, since $p_{r,r}(t)=t$. 

Let $n>r$, and suppose that $p_{n-1,r}(t)$ has $n-r$ real, distinct roots $x_1<x_2<\dots<x_{n-r}=0$. 
The sign of the derivative $p'_{n-1,r}(t)$ alternates on these roots; specifically, $p'_{n-1,r}(x_i)$ is positive if $i$ and $n-r$ have the same parity, and negative otherwise. Since $p_{{n-1},r}(x_i)=0$ and $1-x_i>0$ for all $i$, the same assertion applies to the sign of
$n\,p_{{n-1},r}(x_i)+(1-x_i)\,p_{n-1,r}'(x_i)$.

It follows that the polynomial $n\,p_{{n-1},r}(t)+(1-t)\,p_{n-1,r}'(t)$ must have a root between any pair of consecutive roots of $p_{n-1,r}(t)$, let us denote these roots by $y_1,\dots,y_{n-r-1}$ where $$x_1<y_1<x_2<y_2<\dots<y_{n-r-1}<x_{n-r}=0.$$
Using that 
$$p_{n,r}(t)=t\left(n\,p_{{n-1},r}(t)+(1-t)\,p_{n-1,r}'(t)\right)$$
by Equation~\eqref{eq:recp}, the polynomial $p_{n,r}(t)$ has the roots $y_1,\dots,y_{n-r-1}$, plus a root $y_{n-r}=0$.
Note also that, if $n-r$ is even, then $p_{n,r}(x_1)>0$ and $\lim_{n\to-\infty}p_{n,r}(t)=-\infty$; if $n-r$ is odd, then $p_{n,r}(x_1)<0$ and $\lim_{n\to-\infty}p_{n,r}(t)=+\infty$. In both cases, $p_{n,r}(t)$ has an additional root $y_0<x_1$, for a total of $n-r+1$ roots $y_0<y_1<\dots<y_{n-r-1}<y_{n-r}=0$.
\end{proof}

In analogy to B\'ona's results for Stirling permutations \cite[Thm.\ 3]{bona_real_2008}, we can infer the modal number of descents in $\QQ_n$
from Theorem~\ref{thm:roots}.

\begin{corollary}
Fix $n\ge1$, and let $m$ be an index that maximizes $|\{\pi\in\QQ_n:\des(\pi)=m\}|$ (equivalently, $|\{\pi\in\QQ_n:\asc(\pi)=m\}|$). Then 
$$\left|m-\frac{3n+1}{4}\right|<1.$$ 
Similarly, let $m'$ be an index that maximizes $|\{\pi\in\QQ_n: \plat(\pi)=m'\}|$. Then 
$$\left|m'-\frac{n+1}{2}\right|<1.$$ 
\end{corollary}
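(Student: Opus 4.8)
The plan is to exploit the general principle, attributed to Newton, that if a polynomial $P(t)$ has only real roots, then the location of its largest coefficient (its mode) is tightly controlled by its mean $\mu = P'(1)/P(1)$ and variance. More precisely, if $P(t)=\sum_k c_k t^k$ has nonnegative coefficients and only real roots, then the coefficient sequence is log-concave, hence unimodal, and the peak index $m$ satisfies $|m-\mu|<1$, where $\mu$ is the mean of the associated probability distribution $\Pr[X=k]=c_k/P(1)$. I would first invoke Theorem~\ref{thm:roots} to guarantee that $\oQ_n(t)$ has only real roots, so that the descent-statistic distribution on $\QQ_n$ is of this form and the mode-versus-mean bound applies.

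With that in place, the descent claim is essentially immediate. By Corollary~\ref{cor:average}, the mean of the number of descents over $\QQ_n$ is exactly $\mu = \oQ_n'(1)/\oQ_n(1) = (3n+1)/4$. Applying the real-rootedness bound $|m-\mu|<1$ to $P(t)=\oQ_n(t)$ then yields $\left|m-\tfrac{3n+1}{4}\right|<1$ directly. The ascent statement follows at once because ascents and descents are equidistributed on $\QQ_n$ (reversal swaps them), so the ascent-count polynomial coincides with $\oQ_n(t)$.

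For the plateau statement, I would argue in parallel but using the Narayana generating function. The number of $\pi\in\QQ_n$ with $m$ plateaus is $n!N(n,m)$, whose generating polynomial in $u$ is $P_n(u)=\sum_m n!N(n,m)u^m$. The key point is that the Narayana polynomials are well known to be real-rooted (equivalently, the Narayana-distributed random variable has a log-concave, unimodal coefficient sequence), so the same mode-versus-mean bound applies. Corollary~\ref{cor:average} gives that the average number of plateaus is $(n+1)/2$, so $|m'-\tfrac{n+1}{2}|<1$. The only care needed here is to supply the real-rootedness of the plateau polynomial; this can be cited as a classical fact about Narayana numbers, or derived from the symmetric structure already noted.

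The main obstacle is pinning down and correctly invoking the precise mode-versus-mean inequality. The clean statement $|m-\mu|<1$ for a real-rooted polynomial with nonnegative coefficients is standard (it follows from log-concavity together with the observation that consecutive coefficient ratios $c_{k+1}/c_k$ cross $1$ near $k=\mu$), and it is exactly the tool B\'ona uses in~\cite{bona_real_2008} for the Stirling case; so the cleanest route is to cite that result and transplant the argument verbatim, substituting our means from Corollary~\ref{cor:average}. I would need to double-check the boundary conventions (strict versus weak inequality, and whether the mode can be nonunique) to make sure the strict bound $<1$ is justified rather than merely $\le 1$, but this is a routine verification once the real-rootedness and the exact mean values are in hand.
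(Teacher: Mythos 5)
Your proposal is correct and follows essentially the same route as the paper: invoke Theorem~\ref{thm:roots} (resp.\ the classical real-rootedness of the Narayana polynomials) together with the means computed in Corollary~\ref{cor:average}, and apply the mode-versus-mean bound for real-rooted polynomials, which the paper cites as Darroch's theorem. One small caution: the bound $|m-\mu|<1$ does not follow from log-concavity alone as your parenthetical suggests, so you should cite Darroch's result (or B\'ona's use of it) rather than attempt to rederive it from unimodality.
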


\begin{proof}
Like B\'ona's proof of \cite[Thm.\ 3]{bona_real_2008}, our proof relies on a theorem of Darroch~\cite{darroch_distribution_1964} (see also
\cite[Thm.\ 3.25]{bona_combinatorics_2012} and \cite[Prop.\ 1]{pitman_probabilistic_1997}), which implies that if $p(t)=\sum_{m=0}^n p_m t^m$ is a polynomial that has only real roots and satisfies $p(1)>0$, then an index $m$ that maximizes $p_m$ must satisfy $|m-p'(1)/p(1)|<1$.

By Theorem~\ref{thm:roots}, the polynomial $\oQ_n(t)$ has only real roots, and by Corollary~\ref{cor:average}, $$\frac{\oQ_n'(1)}{\oQ_n(1)}
=\frac{\oQ_n'(1)}{|\QQ_n|}=\frac{3n+1}{4},$$ so the first statement follows.

On the other hand, it is well-known (see \cite[Thm.\ 5.3.1]{brenti_unimodal_1989} and \cite{bona_combinatorics_2012}) that the Narayana polynomials $\sum_{m=1}^n N(n,m) u^m$, which give the distribution of the number of plateaus by Equation~\eqref{eq:Narayana}, have only real roots. Thus, the statement regarding plateaus follows similarly Corollary~\ref{cor:average}.
\end{proof}

\subsection{Asymptotically normal distribution}

Here we prove that the distribution of each of the statistics $\asc$, $\des$ and $\plat$ on quasi-Stirling permutations is asymptotically normal. We use a result of Bender, that can be stated as follows. 

\begin{theorem}[{\cite{bender_central_1973}, see also \cite{canfield_central_1977,bona_real_2008,pitman_probabilistic_1997}}]\label{thm:bender}
Let $\{X_n\}_n$ be a sequence of random variables, where $X_n$ takes values in $[n]$. Suppose that the polynomials $g_n(t)=\sum_{m=1}^n P(X_n=m)\, t^m$ have only real roots, and that 
\begin{equation}\label{eq:var}
 \sigma_n=\sqrt{\Var(X_n)}\to\infty
\end{equation}
as $n\to\infty$. Then 
$$\frac{X_n-E(X_n)}{\sigma_n}\rightarrow N(0,1),$$ which denotes convergence in distribution to the standard normal distribution.
\end{theorem}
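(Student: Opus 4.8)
The plan is to prove Bender's theorem directly, by converting the real-rootedness hypothesis into an explicit probabilistic structure and then invoking a classical central limit theorem for independent, non-identically-distributed summands. The starting point is that $g_n$ is a probability generating function: its coefficients $P(X_n=m)$ are nonnegative and $g_n(1)=1$. Hence $g_n(t)>0$ for every $t>0$, so $g_n$ has no positive roots; combined with the hypothesis that all roots are real, every root of $g_n$ is real and nonpositive.

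First I would use this to factor $g_n$ explicitly. Writing the nonzero roots as $-s_1,\dots,-s_k$ with each $s_j>0$, and noting that $X_n$ takes values in $[n]$ forces the constant term of $g_n$ to vanish (so $0$ is a root of some multiplicity $c\ge1$), normalization at $t=1$ gives
$$g_n(t)=t^{c}\prod_{j=1}^{k}\frac{t+s_j}{1+s_j}=t^{c}\prod_{j=1}^{k}(q_j+p_jt),$$
where $p_j=1/(1+s_j)\in(0,1)$ and $q_j=1-p_j$. Each factor $q_j+p_jt$ is the generating function of a Bernoulli variable with success probability $p_j$, and $t^{c}$ is the generating function of the constant $c$. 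Since the generating function of a sum of independent variables is the product of their generating functions, this identity shows that $X_n$ has the same distribution as $c+\sum_{j=1}^{k}B_j$, where $B_1,\dots,B_k$ are independent Bernoulli variables with parameters $p_1,\dots,p_k$. In particular $E(X_n)=c+\sum_j p_j$ and $\Var(X_n)=\sum_j p_jq_j=\sigma_n^2$.

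Next I would apply Lyapunov's central limit theorem to the centered summands $B_j-p_j$. These are bounded, with $|B_j-p_j|\le1$, so $E|B_j-p_j|^3\le E(B_j-p_j)^2=p_jq_j$, and hence the Lyapunov ratio satisfies
$$\frac{1}{\sigma_n^3}\sum_{j=1}^{k}E|B_j-p_j|^3\le\frac{1}{\sigma_n^3}\sum_{j=1}^{k}p_jq_j=\frac{1}{\sigma_n}\longrightarrow0$$
by the hypothesis $\sigma_n\to\infty$. Lyapunov's condition therefore holds, yielding $\bigl(\sum_j(B_j-p_j)\bigr)/\sigma_n\to N(0,1)$; since the constant shift $c$ cancels upon centering, this is exactly the asserted convergence $(X_n-E(X_n))/\sigma_n\to N(0,1)$.

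The main obstacle is conceptual rather than computational: both the number of Bernoulli factors $k$ and the parameters $p_j$ depend on $n$, so one is genuinely dealing with a triangular array of independent summands, for which the classical i.i.d. central limit theorem does not apply. The correct tool is the Lindeberg--Feller / Lyapunov theorem for triangular arrays, and the role of the hypothesis $\sigma_n\to\infty$ is precisely to control the Lyapunov (equivalently, Lindeberg) ratio uniformly as the array varies with $n$. A minor point to dispatch is the degenerate case where $g_n$ has only the root $0$, so that $X_n$ is constant and $\sigma_n=0$; but the hypothesis $\sigma_n\to\infty$ guarantees that for all large $n$ at least one genuine Bernoulli factor is present, so the argument applies.
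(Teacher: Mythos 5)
Your proof is correct, but note that the paper itself contains no proof of this statement: it is quoted as a known theorem of Bender, with pointers to Canfield, B\'ona, and Pitman. What you have written is essentially the classical argument that appears in that cited literature (it is Pitman's proof, in the spirit of Harper's method): positivity of the coefficients together with $g_n(1)=1$ rules out positive roots, so real-rootedness forces all roots to be nonpositive; the normalized factorization $g_n(t)=t^{c}\prod_j(q_j+p_jt)$ with $p_j=1/(1+s_j)$ exhibits $X_n$ as equal in distribution to a constant plus a sum of independent Bernoulli variables; and the bound $E\lvert B_j-p_j\rvert^3\le p_jq_j$ makes the Lyapunov ratio at most $1/\sigma_n\to 0$, so the Lindeberg--Feller/Lyapunov theorem for triangular arrays applies. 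All the details check out, including the two points that are easy to overlook: that equality in distribution suffices for convergence in distribution of the normalized variables, and that the hypothesis $\sigma_n\to\infty$ disposes of the degenerate case $\sigma_n=0$ for large $n$. So your proposal supplies a complete, self-contained proof of a result the paper uses as a black box, by the same route as the standard references.
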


\begin{theorem}
The distribution of the number of descents (resp.\ ascents, plateaus) on elements of $\QQ_n$ converges 
to a normal distribution as $n\to\infty$.
\end{theorem}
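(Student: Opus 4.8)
The plan is to apply Bender's theorem (Theorem~\ref{thm:bender}) to each of the three statistics separately. In every case the relevant random variable is $X_n$, the value of the statistic on a uniformly chosen element of $\QQ_n$, so that the associated polynomial $g_n(t)=\sum_m P(X_n=m)\,t^m$ is the normalization by $|\QQ_n|$ of the corresponding distribution polynomial. The real-rootedness hypothesis of Theorem~\ref{thm:bender} is already available in all three cases: for descents it is precisely the real-rootedness of $\oQ_n(t)$ established in Theorem~\ref{thm:roots}; for ascents it is the same statement, since $\asc$ and $\des$ are equidistributed on $\QQ_n$ via the reversal symmetry; and for plateaus it is the classical real-rootedness of the Narayana polynomials $\sum_m N(n,m)\,u^m$, which give the plateau distribution by Equation~\eqref{eq:Narayana}. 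Thus the only remaining task is to verify condition~\eqref{eq:var}, namely that $\sigma_n=\sqrt{\Var(X_n)}\to\infty$ in each case.

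To handle descents I would compute the variance directly from the generating function. The first two factorial moments are recovered through
\[
\Var(X_n)=\frac{\oQ_n''(1)}{\oQ_n(1)}+\frac{\oQ_n'(1)}{\oQ_n(1)}-\left(\frac{\oQ_n'(1)}{\oQ_n(1)}\right)^2,
\]
and we already know $\oQ_n(1)=|\QQ_n|=n!\,C_n$ and $\oQ_n'(1)/\oQ_n(1)=(3n+1)/4$ from Corollary~\ref{cor:average}. It therefore suffices to determine the asymptotics of $\oQ_n''(1)$. For this I would differentiate the implicit equation~\eqref{eq:oQ} twice with respect to $t$, set $t=1$, and solve for $\frac{\partial^2\oQ}{\partial t^2}(1,z)$ in terms of the already computed functions $\oQ(1,z)$ from Equation~\eqref{eq:Catalan} and $\frac{\partial\oQ}{\partial t}(1,z)$ from Equation~\eqref{eq:oQ'}. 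The result is an explicit algebraic generating function whose dominant singularity at $z=1/4$ is a power of $\sqrt{1-4z}$; extracting $[z^n]$ (equivalently, by singularity analysis) yields $\oQ_n''(1)$ up to lower-order terms. Substituting into the variance formula, the quadratic terms in $n$ cancel and one is left with $\Var(X_n)\sim c\,n$ for a positive constant $c$, whence $\sigma_n\to\infty$ and Theorem~\ref{thm:bender} applies.

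For ascents nothing new is required: the reversal bijection shows that $\des$ and $\asc$ have the same distribution on $\QQ_n$, so the variance computation and its conclusion are identical. For plateaus I would run the same scheme on the Narayana generating function in Equation~\eqref{eq:Narayana}: differentiating it once and twice with respect to $u$ at $u=1$ gives the first two factorial moments of the plateau statistic, and since the mean is $(n+1)/2$ by Corollary~\ref{cor:average}, a routine coefficient extraction (again governed by a $\sqrt{\,\cdot\,}$ singularity) shows that the variance grows linearly in $n$. Alternatively, one may simply invoke known asymptotics of the Narayana distribution.

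I expect the main obstacle to be the second-derivative computation for descents: because $\oQ$ enters Equation~\eqref{eq:oQ} inside the exponential and composed with itself, differentiating twice produces a nonlinear expression in $\oQ$, $\frac{\partial\oQ}{\partial t}$ and $\frac{\partial^2\oQ}{\partial t^2}$, and the bookkeeping needed to isolate $\frac{\partial^2\oQ}{\partial t^2}(1,z)$ and then extract clean asymptotics from the resulting radical is where care is needed. The real-rootedness input, by contrast, is essentially free given the earlier results, so the entire difficulty of the theorem is concentrated in establishing the single divergence $\Var(X_n)\to\infty$.
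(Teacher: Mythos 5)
Your proposal follows essentially the same route as the paper: apply Bender's theorem using the real-rootedness of $\oQ_n(t)$ from Theorem~\ref{thm:roots} (and of the Narayana polynomials for plateaus), then verify the variance condition by differentiating Equation~\eqref{eq:oQ} twice in $t$ at $t=1$ and extracting coefficients; the paper carries out exactly this computation and finds $\Var(X_n)=\frac{11n^2-6n-5}{48(2n-1)}$, confirming your predicted linear growth. The approach and all key inputs match.
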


\begin{proof}
In order to apply Theorem~\ref{thm:bender} to descents (equivalently, ascents) on quasi-Stirling permutations, we let $X_n$ be the number of descents of a random element of $\QQ_n$.
Then the polynomials $$g_n(t)=\sum_{m=1}^n P(X_n=m)\, t^m=\frac{\oQ_n(t)}{|\QQ_n|}$$ have only real roots by Theorem~\ref{thm:roots}.
It remains to show that Equation~\eqref{eq:var} holds.

Using that $\oQ'_n(1)+\oQ''_n(1)=\sum_{\pi\in\QQ_n} \des(\pi)^2$, we have
\begin{equation}\label{eq:varformula}
\Var(X_n)=E(X_n^2)-E(X_n)^2=\frac{\oQ'_n(1)+\oQ''_n(1)}{|\QQ_n|}-\left(\frac{3n+1}{4}\right)^2,
\end{equation}
since $E(X_n)=\oQ'_n(1)/|\QQ_n|=(3n+1)/4$ by Corollary~\ref{cor:average}.

Differentiating Equation~\eqref{eq:oQ} twice with respect to $t$, setting $t=1$, and solving for $\frac{\partial^2 \oQ}{\partial t^2}(1,z)$, we get
\begin{align*}
\frac{\partial^2 \oQ}{\partial t^2}(1,z)
&=\frac{z}{6}\,\frac{z^3\oQ(1,z)^4-4z^2\oQ(1,z)^3+6z\oQ(1,z)^2+12 z\left(\frac{\partial \oQ}{\partial t}(1,z)\right)^2+12\frac{\partial \oQ}{\partial t}(1,z)}{1-z-3z\oQ(1,z)+z^2\oQ(1,z)+3z^2\oQ(1,z)^2-z^3\oQ(1,z)^3}\\
&=\frac{1}{12z}\left(\frac{14{z}^{3}-18{z}^{2}+23z-4}{(1-4z)^{3/2}}+z+4\right),
\end{align*}
where we have used Equations~\eqref{eq:Catalan} and~\eqref{eq:oQ'} in the last equality.
Extracting the coefficient of $z^n$ in
$$\frac{\partial^2 \oQ}{\partial t^2}(1,z)+\frac{\partial \oQ}{\partial t^2}(1,z)=\frac{1}{12z}\left(\frac{14{z}^{3}-6{z}^{2}+8z-1}{(1-4z)^{3/2}}-2z+1\right),$$
we get that, for $n\ge1$,
\begin{align*}\frac{\oQ'_n(1)+\oQ''_n(1)}{n!}&=\frac{1}{12}[z^{n+1}]\frac{14{z}^{3}-6{z}^{2}+8z-1}{(1-4z)^{3/2}}\\
&=14(2n-3)\binom{2n-4}{n-2}-6(2n-1)\binom{2n-2}{n-1}+8(2n+1)\binom{2n}{n}-(2n+3)\binom{2n+2}{n+1}\\
&=\frac{27n^3 + 10n^2 - 9n - 4}{12n(n + 1)}\binom{2n-2}{n-1}.
\end{align*}
Plugging this expression and the equality $|\QQ_n|=n!C_n$ into Equation~\eqref{eq:varformula}, we obtain
$$\Var(X_n)=\frac {11{n}^{2}-6n-5}{48(2n-1)},$$
and so Equation~\eqref{eq:var} holds.
By Theorem~\ref{thm:bender}, we conclude that
$$\frac{X_n-\frac{3n+1}{4}}{\sqrt{\frac {11{n}^{2}-6n-5}{48(2n-1)}}}\rightarrow N(0,1).$$

Asymptotic normality of the distribution of the number of plateaus can be proved similarly using Equation~\eqref{eq:Narayana};
in fact, it is already known that the Narayana distribution is asymtotically normal, see for example~\cite{fulman_steins_2018}.
\end{proof}

\section{Generalization to $k$-quasi-Stirling permutations}
\label{sec:k}

In the rest of the paper, we significantly extend the results from Section~\ref{sec:des}. On the one hand, we refine them to track the joint distribution of the number of ascents, the number of descents, and the number of plateaus. On the other hand, we generalize them to a one-parameter family of permutations, by allowing $k$ copies of each element.

Gessel and Stanley proposed in~\cite{gessel_stirling_1978} a generalization of Stirling permutations by considering, for a fixed positive integer~$k$, permutations of the multiset $\{1^k,2^k,\dots,n^k\}$ (where the notation $i^k$ indicates $k$ copies of $i$) that avoid the pattern $212$. Let $\Q^k_n$ denote the set of these permutations, which we call {\em $k$-Stirling permutations}, following~\cite{janson_generalized_2011,kuba_analysis_2011}, although they are called $r$-multipermutations in~\cite{park_r-multipermutations_1994,park_inverse_1994}. 
Note that $\Q^1_n=\S_n$ and $\Q^2_n=\Q_n$. For this reason, the coefficients of the Stirling polynomials $Q_n(t)$ are called {\em second-order Eulerian numbers} in \cite{graham_concrete_1994}. 
An even more general version where each element $i$ appears an arbitrary number of times, which may be different for each $i$, was considered by Brenti~\cite{brenti_unimodal_1989}.

Generalizing the definition of quasi-Stirling permutations in an analogous manner, we define {\em $k$-quasi-Stirling permutations} as those permutations of $\{1^k,2^k,\dots,n^k\}$ that avoid the patterns $1212$ and $2121$, and denote this set by $\QQ^k_n$.
Note that $\QQ^1_n=\S_n$ and $\QQ^2_n=\QQ_n$. Viewing  permutations of $\{1^k,2^k,\dots,n^k\}$ as ordered set partitions into blocks of size $k$, the avoidance requirement is equivalent to the partition being {\em noncrossing }.
 In this section, we enumerate $k$-quasi-Stirling permutations by the number of ascents, the number of descents, and the number of plateaus.

\subsection{Bijections to trees}\label{sec:bij_k}

We present two bijections between $k$-quasi-Stirling permutations and different kinds of trees, each one extending a bijection in the literature between $k$-Stirling permutations and certain increasing trees.

In~\cite[Thm.\ 1]{janson_generalized_2011}, Janson, Kuba and Panholzer describe a bijection between $k$-Stirling permutations and {\em $(k+1)$-ary increasing trees} ---this bijection is attributed to Gessel in~\cite{park_r-multipermutations_1994}---, and they use it to study the distribution of ascents, descents and plateaus on $k$-Stirling permutations. A (vertex-labeled) $k$-ary tree is a plane rooted tree where each vertex has either $0$ or $k$ children, and each of the internal (i.e. non-leaf) vertices receives a distinct label between 1 and the number of internal vertices. 
Let $\A^k_n$ denote the set of $k$-ary trees with $n$ internal vertices. Such a tree is {\em increasing} if the label of each vertex is smaller than the labels of its children, disregarding unlabeled leaves.

Inspired by this bijection, we can construct a bijection $\psi$ between $k$-ary trees without the increasing condition, and $k$-quasi-Stirling permutations.

\begin{theorem}\label{thm:psi}
There is a natural bijection  $\psi:\A^k_n\to\QQ^k_n$.
\end{theorem}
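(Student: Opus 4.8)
The plan is to generalize the bijection $\varphi$ from Section~\ref{sec:bij} to the $k$-ary setting, reading off a $k$-quasi-Stirling permutation from a $k$-ary tree by a depth-first traversal. First I would describe the map $\psi$ explicitly: given a tree $T\in\A^k_n$, I perform a counterclockwise depth-first walk starting at the root, and each time I \emph{visit} (rather than traverse an edge into or out of) an internal vertex $v$, I record its label. Since $T$ is a $k$-ary tree, each internal vertex $v$ has exactly $k$ children; during the depth-first walk, $v$ is visited exactly $k$ times ---once before descending into each of its $k$ subtrees (equivalently, once in each of the $k$ ``gaps'' between consecutive child-edges, including the gaps before the first and after the last child). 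Recording the label of $v$ at each of these $k$ visits produces a word in which each of the labels $1,\dots,n$ appears exactly $k$ times, i.e.\ a permutation of $\{1^k,2^k,\dots,n^k\}$.

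Next I would verify that $\psi(T)$ actually lands in $\QQ^k_n$, i.e.\ that it avoids $1212$ and $2121$. The key structural fact is that the $k$ occurrences of a label $i$ in $\psi(T)$ bracket exactly the portions of the walk spent inside the $k$ subtrees hanging off the internal vertex carrying label $i$. Two labels $i$ and $j$ therefore correspond to two internal vertices, and the intervals of the walk they delimit are either nested (one vertex is a descendant of the other, or they sit in the same subtree consecutively) or disjoint, but never interleaved ---this is exactly the noncrossing condition, which forbids the patterns $1212$ and $2121$. I would prove this by induction on the tree structure, or by directly analyzing the relative positions of occurrences of $i$ and $j$ according to whether the two vertices are in an ancestor-descendant relationship.

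Finally, to establish that $\psi$ is a bijection I would construct the inverse, showing how to reconstruct the tree from a permutation $\pi\in\QQ^k_n$. Reading $\pi$ from left to right, the noncrossing condition guarantees that the occurrences of each label define a well-nested bracket structure, and I would recover the parent-child relations and the left-to-right ordering of children from this nesting; the $k$-ary condition corresponds precisely to each label occurring $k$ times. To complete the argument cleanly it suffices to check that $\psi$ is well-defined and injective together with the cardinality match $|\A^k_n|=|\QQ^k_n|$, which both equal $\frac{1}{kn+1}\binom{(k+1)n}{n}\cdot n!$ (the number of $k$-ary trees with $n$ internal vertices times the $n!$ labelings), so a surjectivity-free counting argument also closes the proof. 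The main obstacle I anticipate is the bookkeeping in the noncrossing verification: making the correspondence between ``interleaved occurrences in $\pi$'' and ``interleaved subtree-intervals in the walk'' fully rigorous, especially handling the boundary visits of a vertex (before the first child and after the last child) consistently, and confirming that every internal vertex really is visited exactly $k$ times rather than $k+1$ or $k-1$.
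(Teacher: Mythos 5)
Your overall strategy is the same as the paper's: perform a depth-first walk on the $k$-ary tree, record the label of an internal vertex at its visits so that each label appears $k$ times, check that the resulting word is noncrossing, and invert by peeling off the occurrences of a distinguished label. However, the crux of the construction --- which $k$ visits to record --- is exactly where your write-up goes wrong, and you correctly sensed this in your list of obstacles. An internal vertex with $k$ children is visited $k+1$ times, not $k$: once on arrival and once upon returning from each of its $k$ subtrees. Your phrase ``once before descending into each of its $k$ subtrees'' would record visits $1$ through $k$ (dropping the last), but your parenthetical ``including the gaps before the first and after the last child'' describes all $k+1$ gaps and so would record each label $k+1$ times. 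The paper resolves this by recording every visit \emph{except the first} (equivalently, each return to the vertex); then the root's label $b$ is the last letter of $\pi$, the word decomposes as $\sigma_1 b\sigma_2 b\cdots\sigma_k b$, and the inverse recurses on the $\sigma_i$. Your ``drop the last visit'' convention would also work (it is the mirror image of the paper's map, with the decomposition $a\sigma_1 a\cdots a\sigma_k$ anchored at the first letter), but you must commit to one convention and state it consistently.

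A second concrete error: your cardinality $|\A^k_n|=\frac{1}{kn+1}\binom{(k+1)n}{n}\,n!$ is the count of $(k+1)$-ary trees, not $k$-ary trees. The correct value is $|\A^k_n|=\frac{n!}{(k-1)n+1}\binom{kn}{n}=n!\,C_{n,k}$ (check $k=2$, $n=2$: there are $4$ vertex-labeled binary trees and $4$ elements of $\QQ_2$, whereas your formula gives $6$). So the proposed ``injectivity plus counting'' shortcut fails as written; you would either need the correct Fuss--Catalan count or, as the paper does, a fully explicit inverse map, which also spares you from proving the noncrossing bracket-structure claim in both directions.
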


\begin{proof}
Given a tree in $\A^k_n$, traverse its edges by following a depth-first walk from left to right, and record the label of each vertex that the path returns to; in other words, record every time that a vertex is visited except for the first time.  See Figure~\ref{fig:kary} for examples with $k=2$ and $k=3$.  Let us show that the resulting sequence belongs to $\QQ^k_n$. First, it contains $k$ copies of each element in $[n]$, since the path returns to each internal vertex once coming from each of its $k$ children. Second, it avoids the patterns $1212$ and $2121$ because, if a label $a$ appears between two readings of a label $b$ in the depth-first walk, then vertex $b$ is in the path between vertex $a$ and the root, and so all occurrences of $a$ in the sequence appear between those two occurrences of~$b$. 

To see that $\psi$ is a bijection, let us describe its inverse. Given $\pi\in\QQ^k_n$, let $b=\pi_n$, and decompose $\pi$ as $\pi=\sigma_1 b \sigma_2 b \dots \sigma_{k-1} b \sigma_k b$. The subsequences $\sigma_1,\sigma_2,\dots,\sigma_k$ must have pairwise disjoint entries, because $\pi$ avoids $1212$ and $2121$, and so each $\sigma_i$ is a $k$-quasi-Stirling permutation whose entries have been relabeled by an order-preserving function.
Then $\psi^{-1}(\pi)$ is the $k$-ary tree consisting of a root labeled $b$ with subtrees $\psi^{-1}(\sigma_1),\psi^{-1}(\sigma_2),\dots,\psi^{-1}(\sigma_k)$ from left to right, constructed recursively.
\end{proof}

For $k=2$, $\psi$ is a bijection between (vertex-labeled) binary trees and quasi-Stirling permutations. 
Interestingly, the shift of the parameter $k$ in the bijection from~\cite{janson_generalized_2011} between $(k+1)$-ary increasing trees and $k$-Stirling permutations disappears in our bijection $\psi$. 

\medskip

\begin{figure}[htb]
\centering
 \begin{tikzpicture}[scale=1.2]
     \draw[thick] (0,3) coordinate(d4) -- ++(-1.5,-1) coordinate(d3) -- ++(-.8,-1) coordinate(d2) -- ++(-.5,-1);
     \draw[thick] (d2) -- ++(.5,-1); 
     \draw[thick] (d3) -- ++(.8,-1) coordinate(d7) -- ++(-.5,-1)  coordinate(d6) -- ++(-.3,-1);
     \draw[thick] (d6) -- ++(.3,-1); 
     \draw[thick] (d7) -- ++(.5,-1) coordinate(d5)-- ++(-.3,-1);
     \draw[thick] (d5) -- ++(.3,-1); 
     \draw[thick] (d4) -- ++(1.5,-1) coordinate(d1) -- ++(-.8,-1);
     \draw[thick] (d1) -- ++(.8,-1); 
     \foreach \x in {1,...,7} {
        \draw[fill,very thick] (d\x) circle (2pt);
        \draw (d\x) node[right] {$\x$};
      }
     \foreach \x in {1} {
        \draw[fill] (d\x)+(-.8,-1) circle (1pt);
        \draw[fill] (d\x)+(.8,-1) circle (1pt);
	}
     \foreach \x in {2} {
        \draw[fill] (d\x)+(-.5,-1) circle (1pt);
        \draw[fill] (d\x)+(.5,-1) circle (1pt);
	}
     \foreach \x in {6,5} {
        \draw[fill] (d\x)+(-.3,-1) circle (1pt);
        \draw[fill] (d\x)+(.3,-1) circle (1pt);
	}
      \draw (3.5,1) node[right] {$\longrightarrow \quad 22366755734114$};
      \draw (3.65,1) node[above right]{$\psi$};
      \end{tikzpicture}
\bigskip
      
 \begin{tikzpicture}[scale=1.2]
     \draw[thick] (0,3) coordinate(d3) -- (-2,2) coordinate(d6) -- (-2,1) coordinate(d2) -- (-2,0);
     \draw[thick] (d6) -- ++(-.6,-1);      \draw[thick] (d6) -- ++(0.6,-1); 
     \draw[thick] (d2) -- ++(-.4,-1);      \draw[thick] (d2) -- ++(0.4,-1); 
     \draw[thick] (d3) -- (0,2) coordinate(d5) -- (0,1); 
     \draw[thick] (d5) -- ++(-.6,-1);      \draw[thick] (d5) -- ++(0.6,-1); 
     \draw[thick] (d3) -- (2,2) coordinate(d1) -- (1.4,1) coordinate(d7) -- (1.8,0) coordinate(d4) -- (1.8,-1);
     \draw[thick] (d1) -- ++(0,-1);      \draw[thick] (d1) -- ++(0.6,-1); 
     \draw[thick] (d7) -- ++(-.4,-1);      \draw[thick] (d7) -- ++(0,-1); 
     \draw[thick] (d4) -- ++(-.3,-1);      \draw[thick] (d4) -- ++(0.3,-1); 
     \foreach \x in {1,...,7} {
        \draw[fill,very thick] (d\x) circle (2pt);
        \draw (d\x) node[right] {$\x$};
      }
     \foreach \x in {1,5,6} {
        \draw[fill] (d\x)+(-.6,-1) circle (1pt);
        \draw[fill] (d\x)+(0,-1) circle (1pt);
        \draw[fill] (d\x)+(.6,-1) circle (1pt);
	}
     \foreach \x in {2,7} {
        \draw[fill] (d\x)+(-.4,-1) circle (1pt);
        \draw[fill] (d\x)+(0,-1) circle (1pt);
        \draw[fill] (d\x)+(.4,-1) circle (1pt);
	}
     \foreach \x in {4} {
        \draw[fill] (d\x)+(-.3,-1) circle (1pt);
        \draw[fill] (d\x)+(0,-1) circle (1pt);
        \draw[fill] (d\x)+(.3,-1) circle (1pt);
	}
      \draw (3.5,1) node[right] {$\longrightarrow \quad 622266355537744471113$};
      \draw (3.65,1) node[above right]{$\psi$};
      \end{tikzpicture}
      \caption{Examples of the bijection $\psi:\A^k_n\to\QQ^k_n$ for $k=2$ (above) and $k=3$ (below).}
      \label{fig:kary}
\end{figure}

In order to study ascents, descents and plateaus, we introduce a second bijection $\phi$ between trees and $k$-quasi-Stirling permutations that will be more suitable to track these statistics. It extends a construction of Kuba and Panholzer \cite[Thm.\ 2.2]{kuba_analysis_2011} that, with a slight modification, yields a bijection between $k$-Stirling permutations and certain modified increasing trees that we describe next.

For $k\ge2$, let $\T^k_n$ be the set of edge-labeled plane rooted trees with $n$ edges, where every node other than the root has $k-2$ unlabeled half-edges which serve as walls, separating its children into $k-1$ (possibly empty) compartments. By definition, $\T^2_n=\T_n$. A tree in $\T^3_n$ is drawn on the left of Figure~\ref{fig:phi}. We call trees in $\T^k_n$ {\em compartmented  trees}.
Denote by $\I^k_n\subseteq\T^k_n$ the subset of those trees whose labels along any path from the root to a leaf are increasing. 

\begin{figure}[htb]
\centering
 \begin{tikzpicture}[scale=1.2]
     \draw[thick] (0,3) coordinate(d0) -- (-1,2) coordinate(d6) -- (-1.5,1) coordinate(d2);
     \draw[thick] (d0) -- (1,2) coordinate(d3) -- (0,1) coordinate(d5); 
     \draw[thick] (d3) -- (1.5,1) coordinate(d7) -- (2,0) coordinate(d4);
     \draw[thick] (d3) -- (3,1) coordinate(d1);
      \draw[fill,very thick] (d0) circle (2pt);
      \foreach \x in {1,...,7} {
        \draw[fill,very thick] (d\x) circle (2pt) -- ++(0,-0.3);
      }
      \foreach \x in {5,6} {
        \draw (d\x)+(.2,.5) node {$\x$};
      }
      \foreach \x in {2} {
        \draw (d\x)+(.05,.5) node {$\x$};
      }
      \foreach \x in {1} {
        \draw (d\x)+(-.55,.5) node {$\x$};
      }     
      \foreach \x in {3} {
        \draw (d\x)+(-.2,.5) node {$\x$};
      }     
      \foreach \x in {4,7} {
        \draw (d\x)+(-.05,.5) node {$\x$};
      }
      \draw (4,1.5) node[right] {$\longrightarrow \quad 622266355537744471113$};
      \draw (4.15,1.5) node[above right]{$\phi$};
      \end{tikzpicture}
      \caption{An example of the bijection $\phi:\T^3_n\to\QQ^3_n$.}
      \label{fig:phi}
\end{figure}

We are now ready to describe the bijection $\phi$ between $\T^k_n$ and $\QQ^k_n$. For $k=2$, $\phi$ coincides with $\varphi:\T_n\to\QQ_n$ described in Section~\ref{sec:bij}. When restricted to increasing trees, $\phi$ becomes a bijection between $\I^k_n$ and $\Q^k_n$, which is a version of~\cite[Thm.\ 2.2]{kuba_analysis_2011}. 

\begin{theorem}\label{thm:phi}
There is a natural bijection  $\phi:\T^k_n\to\QQ^k_n$.
\end{theorem}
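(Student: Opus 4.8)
The plan is to describe a depth-first traversal construction analogous to $\varphi$, but adapted to the walls. Given a compartmented tree $T\in\T^k_n$, I would traverse its edges by a counterclockwise depth-first walk from the root, recording the label of each traversed edge. Since each edge is traversed twice, each label appears twice from this alone; to account for the $k-2$ walls at each non-root vertex, I would record the label of the edge above a vertex $v$ (joining $v$ to its parent) an additional time each time the walk crosses one of the $k-2$ walls of $v$. Thus as the walk enters the compartments of $v$ from left to right, separated by walls, the edge-label above $v$ is recorded once upon first descending into $v$, once more upon each wall-crossing between consecutive compartments, and once upon finally returning to the parent; together with the second traversal this yields exactly $k$ copies of each label. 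I would verify that the resulting word $\phi(T)$ lies in $\QQ^k_n$: it has $k$ copies of each element of $[n]$, and it avoids $1212$ and $2121$ because any occurrence of a label $a$ nested between two occurrences of $b$ forces edge $a$ to lie in the subtree below edge $b$, so all $k$ occurrences of $a$ are confined between consecutive occurrences of $b$, precluding any crossing.

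The main work is to show $\phi$ is a bijection, which I would do by exhibiting an explicit inverse, paralleling the inverse of $\psi$ in Theorem~\ref{thm:psi}. Given $\pi\in\QQ^k_n$, I would read off the root's incident edges: the distinct labels $a_1,\dots,a_d$ of the edges incident to the root appear in $\pi$ in a pattern determined by the depth-first order, and the maximal blocks of $\pi$ lying strictly between consecutive top-level occurrences recover the subtrees. More precisely, I would locate the positions of the "top-level" letters (those whose arcs, in the noncrossing matching/partition picture, are not nested inside any other arc), use them to cut $\pi$ into factors corresponding to the compartments and subtrees hanging off the root, and recurse on each factor after relabeling by the order-preserving map onto a suitable $[m]$. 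The noncrossing (pattern-avoidance) condition guarantees this decomposition is well defined and that the factors are themselves $k$-quasi-Stirling permutations, so the recursion is valid and terminates.

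The key steps in order: (1) define $\phi$ via the wall-aware depth-first walk and check the output has the right multiplicities; (2) check $\phi(T)$ avoids $1212$ and $2121$; (3) define the candidate inverse by cutting $\pi$ at its top-level occurrences into compartments and subtrees, and recursing; (4) verify $\phi\circ\phi^{-1}$ and $\phi^{-1}\circ\phi$ are identities, which amounts to checking that the cut-points produced by the inverse are exactly the traversal-induced record positions, and that the wall structure is recovered faithfully. I would then note, as the theorem's final assertion, that restricting $\phi$ to the increasing trees $\I^k_n$ yields the $k$-Stirling permutations $\Q^k_n$, recovering the Kuba--Panholzer bijection~\cite[Thm.\ 2.2]{kuba_analysis_2011}: this follows because the increasing condition on labels along root-to-leaf paths corresponds exactly to avoidance of $212$ in the resulting word.

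I expect the main obstacle to be bookkeeping the $k-2$ walls correctly so that each non-root label gets recorded exactly $k$ times and the compartment boundaries align precisely with the cut-points of the inverse. In particular, the delicate point is that a label should be recorded once per wall-crossing in addition to the two edge-traversals, and one must confirm that empty compartments (walls with nothing between them) still produce the correct repeated letter in $\pi$; getting this indexing right, and matching it against the inverse's decomposition into $k-1$ compartments per vertex, is where the care is needed, even though each individual verification is routine once the convention is fixed.
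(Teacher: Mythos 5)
Your construction is essentially the paper's: recording the parent-edge label once per wall-crossing is exactly the paper's device of labeling each of the $k-2$ half-edges at $v$ with the label of the edge to its parent and reading them once during the depth-first walk, and the multiplicity and pattern-avoidance checks match. The only cosmetic difference is in the inverse: the paper peels off the first letter $a=\pi_1$ and uses its $k$ occurrences to cut $\pi$ into $\sigma_1,\dots,\sigma_k$ (the first $k-1$ becoming the compartments of the root's leftmost child and $\sigma_k$ the rest of the root's subtrees), which is a cleaner recursion than identifying all top-level letters at once, but your version is sound.
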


\begin{proof}
Given a tree $T\in\T^k_n$, first label the half-edges at each node $v$ with the label of the edge between $v$ and its parent. Then traverse the edges and half-edges of $T$ following a depth-first walk from left to right, and record their labels as they are traversed, with the rule that the label of each half-edge only contributes once.
Let $\phi(T)$ be the resulting sequence of recorded labels; see Figure~\ref{fig:phi} for an example.
We claim that $\phi(T)\in\QQ^k_n$. Indeed, each element in $[n]$ appears $k$ times: twice from traversing the edge with that label, and $k-2$ times from traversing the half-edges with that label. Additionally, the sequence does not contain the patterns $1212$ and $2121$ because, if a label $a$  appears between two readings of a label $b$ in the depth-first walk, then the whole subtree containing the edge labeled $a$ (which includes all occurrences of $a$) must be read between those two occurrences of~$b$. 

Next we show that $\phi$ is a bijection by describing its inverse. Given $\pi\in\QQ^k_n$, let $a=\pi_1$, and decompose $\pi$ according to the occurrences of $a$ as
$\pi=a\sigma_1 a \sigma_2 a \dots a \sigma_{k-1} a \sigma_k$. The subsequences $\sigma_1,\sigma_2,\dots,\sigma_k$ must have pairwise disjoint entries, because $\pi$ avoids $1212$ and $2121$, and so each $\sigma_i$ is a $k$-quasi-Stirling permutation with relabeled entries.
We obtain $\phi^{-1}(\pi)$ as follows. First, place an edge with label $a$ from the root to its leftmost child. In the $k-1$ compartments
hanging from that child, place the subtrees $\phi^{-1}(\sigma_1),\dots,\phi^{-1}(\sigma_{k-1})$ from left to right, constructed recursively.
Finally, place the subtree $\phi^{-1}(\sigma_k)$ hanging from the root of $\phi^{-1}(\pi)$, to the right of the initially placed edge.
\end{proof}

Either of the bijections $\psi$ or $\phi$ allows us to easily count the number of $k$-quasi-Stirling permutations. We denote the {\em $k$-Catalan numbers} (see~\cite[pp.\ 168--173]{stanley_enumerative_1999}) by $$C_{n,k}=\frac{1}{(k-1)n+1}\binom{kn}{n}.$$

\begin{theorem}\label{thm:QQkn}
For $n\ge1$ and $k\ge1$,
$$|\QQ^k_n|=\frac{(kn)!}{((k-1)n+1)!}=n!\,C_{n,k}.$$
\end{theorem}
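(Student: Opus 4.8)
The plan is to establish the formula $|\QQ^k_n|=\frac{(kn)!}{((k-1)n+1)!}=n!\,C_{n,k}$ by counting the trees that the bijections $\psi$ and $\phi$ put in correspondence with $k$-quasi-Stirling permutations, since Theorems~\ref{thm:psi} and~\ref{thm:phi} reduce the problem to a purely enumerative tree count. I would work with $\A^k_n$, the set of $k$-ary trees with $n$ internal vertices, where each internal vertex carries a distinct label from $[n]$. The key observation is that such a labeled tree factors as an underlying \emph{unlabeled} $k$-ary shape together with a labeling of its internal vertices, so that $|\A^k_n|$ equals (the number of unlabeled $k$-ary trees with $n$ internal vertices) times $n!$.

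First I would recall the classical fact that the number of unlabeled $k$-ary trees with $n$ internal vertices is the $k$-Catalan number $C_{n,k}=\frac{1}{(k-1)n+1}\binom{kn}{n}$; this is exactly the enumeration cited from~\cite[pp.\ 168--173]{stanley_enumerative_1999}, obtained for instance from the functional equation $y=1+zy^k$ satisfied by the generating function for unlabeled $k$-ary trees, solved by Lagrange inversion. Multiplying by the $n!$ ways to place the distinct labels on the internal vertices gives
$$|\A^k_n|=n!\,C_{n,k}=\frac{n!}{(k-1)n+1}\binom{kn}{n}=\frac{(kn)!}{((k-1)n+1)!\,(kn-n)!}\cdot\frac{n!}{1}\cdot\frac{1}{n!}\cdot(kn-n)!\big/(kn-n)! ,$$
which I would then simplify cleanly to $\frac{(kn)!}{((k-1)n+1)!}$; the algebraic simplification amounts to noting that $\binom{kn}{n}n!=\frac{(kn)!}{(kn-n)!}$ and that $(kn-n)!=((k-1)n)!$, so that $\frac{n!}{(k-1)n+1}\binom{kn}{n}=\frac{(kn)!}{((k-1)n+1)\,((k-1)n)!}=\frac{(kn)!}{((k-1)n+1)!}$. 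Finally, by Theorem~\ref{thm:psi} the bijection $\psi:\A^k_n\to\QQ^k_n$ yields $|\QQ^k_n|=|\A^k_n|$, completing the proof.

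The main obstacle is essentially bookkeeping rather than conceptual: I must be careful that the count of \emph{labeled} $k$-ary trees is $n!\,C_{n,k}$ and not something that double-counts symmetric configurations, but since the labels are all distinct and the tree is plane (so left-to-right order of children is fixed by the shape), there are no nontrivial automorphisms to worry about and the factor of $n!$ is exact. An alternative and perhaps cleaner route avoids even citing $C_{n,k}$ separately: one can apply Lagrange inversion directly to the generating function $G(z)=\sum_{n\ge0}|\A^k_n|\,z^n/n!$, which by the recursive decomposition in the proof of Theorem~\ref{thm:psi} (a root together with $k$ ordered subtrees) is the exponential analogue, or simply observe via the recursive structure of $\phi^{-1}$ that the ordinary generating function $y(z)=\sum_n C_{n,k}z^n$ satisfies $y=1+zy^k$ and extract $[z^n]y$ by Lagrange inversion as $\frac{1}{n}[z^{n-1}](1+ \cdot)^{kn}$-type formula; either way one lands on $C_{n,k}=\frac1{(k-1)n+1}\binom{kn}{n}$. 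I would present the short bijective argument through $\A^k_n$ as the primary proof and relegate the Lagrange-inversion verification of the $k$-Catalan count to a citation of~\cite{stanley_enumerative_1999}.
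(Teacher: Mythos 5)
Your proposal is correct, and it reaches the count by a route that is close to, but not identical with, the paper's. The paper's primary argument uses the bijection $\phi$ to compartmented trees $\T^k_n$, derives the functional equation $\oG(z)=1/(1-z\oG(z)^{k-1})$ (equivalently $\oG(z)-z\oG(z)^k=1$) for the exponential generating function from the recursive structure of those trees, and then carries out the Lagrange inversion explicitly to get $C_{n,k}$; the route through $\psi$ and $k$-ary trees, which you take as primary, appears in the paper only as a one-line alternative yielding $\oG(z)=1+z\oG(z)^k$. What you do differently is to make explicit the observation that the labels decouple completely --- a plane $k$-ary tree has no automorphisms, so $\lvert\A^k_n\rvert$ is exactly $n!$ times the number of unlabeled shapes --- and then cite the classical $k$-Catalan enumeration rather than rederiving it. This buys a shorter, more elementary proof at the cost of outsourcing the Lagrange-inversion step; since the paper's EGF computation $\lvert\QQ^k_n\rvert/n!=C_{n,k}$ is precisely the unlabeled count you are citing, the two arguments are mathematically the same computation packaged differently. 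One cosmetic flaw: your displayed intermediate expression for $n!\,C_{n,k}$ is garbled (as written it evaluates to $\frac{(kn)!}{((k-1)n+1)!\,((k-1)n)!}$, which is off by a factor of $((k-1)n)!$); the prose that follows it contains the correct simplification $\frac{n!}{(k-1)n+1}\binom{kn}{n}=\frac{(kn)!}{((k-1)n+1)\,((k-1)n)!}=\frac{(kn)!}{((k-1)n+1)!}$, so this is a typo to fix rather than a gap in the argument.
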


\begin{proof}
Consider the EGF $\oG(z)=\sum_{n\ge0}|\QQ^k_n|z^n/n!$.
By Theorem~\ref{thm:phi}, $|\QQ^k_n|=|\T^k_n|$. We enumerate compartmented trees by giving a recursive description of the class $\T^k=\bigcup_{n\ge0} \T^k_n$.
Every non-empty tree in $\T^k$ consists of a root having a sequence of children, where each of these children plays itself the role of a root of a sequence of $k-1$ trees from $\T^k$, one in each of the compartments. This description translates into the equation
$$\oG(z)=\frac{1}{1-z\oG(z)^{k-1}},$$
which is equivalent to $\oG(z)-z\oG(z)^k=1$.

Alternatively, using Theorem~\ref{thm:psi}, $|\QQ^k_n|=|\A^k_n|$, so one can enumerate $k$-ary trees instead. The obvious recursive description of such trees yields the equation $\oG(z)=1+z\oG(z)^k$ for their EGF.

Applying the Lagrange inversion formula to $F(z):=\oG(z)-1$, which satisfies $F(z)=z(1+F(z))^k$, we get, for $n\ge1$,
$$\frac{|\QQ^k_n|}{n!}=[z^n]F(z)=\frac{1}{n}[z^{n-1}](1+z)^{kn}=\frac{1}{n}\binom{kn}{n-1}=C_{n,k}.$$
\end{proof}

\subsection{Ascents, descents and plateaus on $k$-quasi-Stirling permutations}

We are now ready to generalize Theorem~\ref{thm:main} to give an equation for the refined $k$-quasi-Stirling polynomials
$$\oP^{(k)}_n(q,t,u)=\sum_{\pi\in\QQ^k_n} q^{\asc(\pi)}t^{\des(\pi)} u^{\plat(\pi)},$$
and their EGF
$$\oP^{(k)}(q,t,u;z)=\sum_{n\ge0}\oP^{(k)}_n(q,t,u)\frac{z^n}{n!}.$$
For $k=2$, we have $\oP^{(2)}(1,t,1;z)=\oQ(t,z)$ by definition, which we computed in Theorem~\ref{thm:main}. On the other hand, an expression for $\oP^{(2)}(1,1,u;z)$ is given by Equation~\eqref{eq:Narayana}.

In order to track ascents, it will be useful to consider the homogenization of the Eulerian polynomials, 
$$\hat{A}_n(q,t)=\sum_{\pi\in\S_n} q^{\asc(\pi)} t^{\des(\pi)},$$ and their EGF
\begin{align*}\hat{A}(q,t;z)&=\sum_{n\ge0} \hat{A}_n(q,t) \frac{z^n}{n!}=
1+\sum_{n\ge1} A_n(t/q) q^{n+1} \frac{z^n}{n!} = 1+q(A(t/q,qz)-1)\\
&=1-q+\frac{q(q-t)}{q-te^{(q-t)z}},
\end{align*}
using Equation~\eqref{eq:A_GF}.

\begin{theorem}\label{thm:main_plat_k}
Fix $k\ge1$. The EGF of $k$-quasi-Stirling permutations by the number of ascents, the number of descents, and the number of plateaus satisfies the implicit equation $\oP^{(k)}(q,t,u;z)=\hat{A}(q,t,z(\oP^{(k)}(q,t,u;z)-1+u)^{k-1})$, that is,
\begin{equation}\label{eq:oP}
\oP^{(k)}(q,t,u;z)=1-q+\frac{q(q-t)}{q-t\,\exp\left((q-t)z(\oP^{(k)}(q,t,u;z)-1+u)^{k-1}\right)}.
\end{equation}
In particular, for $n\ge1$, its coefficients satisfy 
\begin{equation}
\oP^{(k)}_n(q,t,u)=\frac{n!}{(k-1)n+1}\,[z^n]\left(\hat{A}(q,t;z)-1+u\right)^{(k-1)n+1}.
\end{equation}
\end{theorem}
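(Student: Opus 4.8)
The plan is to generalize the proof of Theorem~\ref{thm:main} by giving a recursive decomposition of compartmented trees $\T^k_n$ that tracks all three statistics simultaneously, using the bijection $\phi:\T^k_n\to\QQ^k_n$ from Theorem~\ref{thm:phi}. First I would establish how $\asc$, $\des$ and $\plat$ translate under $\phi$ into local statistics on the tree. By the same reasoning as in Lemma~\ref{lem:cdes}, descents of $\pi=\phi(T)$ correspond to cyclic descents computed at each vertex, now reading the compartment walls as copies of the parent edge label; ascents correspond analogously to cyclic ascents; and plateaus should correspond to leaves, extending Equation~\eqref{eq:lea}. The key structural fact is that reading the labels incident to a non-root vertex $v$ together with its $k-2$ walls (each carrying the parent-edge label) in counterclockwise order gives a cyclic sequence, and rotating does not change the number of cyclic descents, so the contribution of $v$ factors cleanly.

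Next I would set up the recursive class description. Every non-empty tree in $\T^k$ is a sequence of children of the root, and each child is the root of a cluster consisting of $k-1$ compartments, each holding a (possibly empty) sequence of subtrees from $\T^k$. The crucial bookkeeping is that an empty compartment at a non-root vertex corresponds to a plateau, whereas a non-empty compartment recursively contributes a factor of $\oP^{(k)}(q,t,u;z)$ after accounting for cyclic descents/ascents at that vertex. This is why the combination $\oP^{(k)}(q,t,u;z)-1+u$ appears: the constant term $1$ of $\oP^{(k)}$ (the empty tree) is replaced by $u$ to mark a plateau, so each child contributes $z(\oP^{(k)}-1+u)^{k-1}$, one factor per compartment. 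The root itself is special, contributing ordinary (non-cyclic) descents and ascents among its children, which is exactly why the homogenized Eulerian generating function $\hat A(q,t;z)$ governs the outer sequence. Assembling these pieces via the bivariate Compositional Formula, as in the proof of Theorem~\ref{thm:main}, yields the implicit equation $\oP^{(k)}(q,t,u;z)=\hat A(q,t;z(\oP^{(k)}(q,t,u;z)-1+u)^{k-1})$.

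For the coefficient extraction I would apply Lagrange inversion. Writing $G=\oP^{(k)}-1+u$, the functional equation becomes $G=u-q+\hat A(q,t;zG^{k-1})$; setting $F(z)=zG^{k-1}$ one obtains an equation of the form $F=z\,H(F)$ amenable to the Lagrange inversion formula, giving
$$\oP^{(k)}_n(q,t,u)=\frac{n!}{(k-1)n+1}\,[z^n]\bigl(\hat A(q,t;z)-1+u\bigr)^{(k-1)n+1}$$
after extracting $[z^n]$ and simplifying the binomial factor $\frac{1}{(k-1)n+1}$, exactly as the factor $\frac{1}{n+1}$ arose in Equation~\eqref{eq:Lagrange}. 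The specialization checks (namely $k=2$ recovering Theorem~\ref{thm:main}, and $q=t=1$ recovering Theorem~\ref{thm:QQkn} via $\hat A(1,1;z)=\frac{1}{1-z}$) would confirm the constants.

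The main obstacle I anticipate is verifying precisely how the three statistics behave at each vertex under $\phi$, especially the role of the $k-2$ walls and the distinction between cyclic descents at internal vertices versus ordinary descents at the root. In particular, pinning down that an empty compartment contributes a plateau (accounting for the $-1+u$ substitution) and that the walls, carrying the parent label, neither create nor destroy cyclic descents beyond what is intended requires a careful case analysis of the depth-first traversal, analogous to but more delicate than the three-case argument in Lemma~\ref{lem:cdes}. Once this local dictionary is firmly established, the generating-function manipulation is a routine extension of the $k=2$ argument.
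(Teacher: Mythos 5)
Your proposal follows essentially the same route as the paper: translate $\asc$, $\des$, $\plat$ through $\phi$ into cyclic ascents, cyclic descents, and empty compartments, decompose a tree as a sequence of one-child-rooted subtrees with the ordinary Eulerian polynomial $\hat{A}$ governing the root, and finish with Lagrange inversion; the paper packages the middle step as Lemma~\ref{lem:T'_k}, whose add-$i$-mod-$n$ relabeling argument supplies exactly the ``contribution of $v$ factors cleanly'' claim that you correctly flag as the delicate point. One correction: for $k\ge3$ plateaus do \emph{not} correspond to leaves (a leaf would contribute $k-1$ plateaus, not one); the correct dictionary, which you in fact use in your second paragraph, is that plateaus correspond to empty compartments, and this is precisely what justifies replacing the constant term $1$ by $u$ in the factor $\oP^{(k)}(q,t,u;z)-1+u$.
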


Our proof relies on the bijection $\phi$. The first step is to analyze how the statistics $\asc$, $\des$ and $\plat$ are transformed by this bijection.
To this end, we extend the notion of cyclic descents to compartmented trees.
Let $T\in\T^k_n$, and let $v$ a vertex of $T$. If $v$ is not the root, define $\cdes(v)$ to be the number of cyclic descents of the sequence obtained by listing the labels of the edges and half-edges incident to $v$ in counterclockwise order, where the half-edges are given the label of the edge between $v$ and its parent. Equivalently, if this label is $\ell$, and the labels of the edges between $v$ and its children in the $j$-th compartment from the left are $a_{j,1},a_{j,2},\dots,a_{j,d_j}$ from left to right, for $1\le j\le k-1$, then 
\begin{equation}\label{eq:cdes_k}
\cdes(v)=\cdes(\ell a_{1,1}\dots a_{1,d_1}\ell a_{2,1}\dots a_{2,d_2}\ell \dots \ell a_{k-1,1}\dots a_{k-1,d_{k-1}}).
\end{equation} 
If $v$ is the root of $T$, define $\cdes(v)$ to be the number of descents of the sequence obtained by listing the labels of the edges incident to $v$ from left to right. Finally, define the number of cyclic descents of $T$ to be $$\cdes(T)=\sum_v \cdes(v),$$ where the sum ranges over all the vertices $v$ of $T$.
For example, if $T$ is the tree on the left of Figure~\ref{fig:phi}, then $\cdes(T)=2+1+0+2+0+1+0+0=6$, where the two vertices with $\cdes(v)=2$ are the root and the vertex with 3 children.

Switching the direction of the inequalities, we define the number of {\em cyclic ascents} of a sequence of positive integers as
$\casc(\pi_1\pi_2\dots\pi_r)=|\{i\in[r]: \pi_i<\pi_{i+1}\}|$, with the convention $\pi_{r+1}:=\pi_1$.
For a non-root vertex $v$ of $T\in\T^k_n$, define $\casc(v)$ in analogy to Equation~\eqref{eq:cdes_k}, replacing $\cdes$ with $\casc$. If $v$ is the root of $T$, define $\casc(v)$ to be the number of ascents of the sequence obtained by listing the labels of the edges incident to $v$ from left to right. The number of cyclic ascents of $T$ is then defined as $\casc(T)=\sum_v \casc(v)$, summing over all the vertices of $T$.

Recall that the children of each non-root vertex of $T\in\T^k_n$ are separated into $k-1$ (possibly empty) compartments. Denote by $\emp(T)$ the total number of empty compartments of $T$. For example, if $T$ is the tree on the left of Figure~\ref{fig:phi}, then $\emp(T)=10$.

\begin{lemma}\label{lem:cdes_k}
The bijection $\phi:\T^k_n\to\QQ^k_n$ has the following property: if $T\in\T^k_n$ and $\pi=\phi(T)\in\QQ^k_n$, then $$\asc(\pi)=\casc(T),  \quad \des(\pi)=\cdes(T)  \quad \text{and} \quad \plat(\pi)=\emp(T).$$
\end{lemma}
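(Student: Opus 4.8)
The plan is to track how a single traversal step of the depth-first walk in $\phi$ contributes to each of the three statistics $\asc$, $\des$, $\plat$ of $\pi=\phi(T)$, and to show that these contributions are exactly bookkept by $\casc(v)$, $\cdes(v)$, and $\emp(T)$ respectively, summed over the vertices $v$ of $T$. This is the natural generalization of Lemma~\ref{lem:cdes}, whose proof already handles the descent case for $k=2$; the main new features are the half-edges (which carry the parent's label and are read only once) and the plateau/empty-compartment correspondence.

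First I would set up the step-by-step analysis exactly as in the proof of Lemma~\ref{lem:cdes}. Recall that in $\phi$ each half-edge at a non-root vertex $v$ is relabeled with the label $\ell$ of the edge from $v$ to its parent, and is recorded once. Consider the counterclockwise depth-first walk; at each recorded label I ask whether the transition to the next recorded label is an ascent, descent, or plateau of $\pi$. The key observation is that every such transition is ``local'' to a single vertex $v$: two consecutive recorded labels always correspond to two consecutive edge-or-half-edge slots incident to the \emph{same} vertex $v$ in the counterclockwise cyclic order around $v$. For a non-root $v$, listing the labels of the edges and half-edges incident to $v$ counterclockwise produces precisely the sequence $\ell a_{1,1}\dots a_{1,d_1}\ell a_{2,1}\dots\ell a_{k-1,1}\dots a_{k-1,d_{k-1}}$ appearing in Equation~\eqref{eq:cdes_k}, where the repeated $\ell$'s are the half-edges serving as compartment walls and the final wrap-around back to the first $\ell$ is supplied by the edge to the parent. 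Thus the descents (resp.\ ascents) of $\pi$ contributed ``at $v$'' are exactly the cyclic descents (resp.\ cyclic ascents) of that cyclic word, which is the definition of $\cdes(v)$ (resp.\ $\casc(v)$). For the root, there are no half-edges and no parent edge, so one reads the edge labels left to right and the contribution is an ordinary $\des$ (resp.\ $\asc$), matching the root conventions. Summing over all $v$ gives $\des(\pi)=\sum_v\cdes(v)=\cdes(T)$ and $\asc(\pi)=\sum_v\casc(v)=\casc(T)$.

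For the plateau statistic I would argue that a plateau of $\pi$, i.e.\ two equal consecutive entries, arises exactly when two consecutive recorded labels are both the label $\ell$ of the same vertex $v$, which happens precisely when two half-edge walls at $v$ are adjacent in the counterclockwise order with no edge between them, that is, when one of the $k-1$ compartments at $v$ is empty. (The two occurrences of a label coming from traversing a genuine edge in both directions are never adjacent in the recorded sequence unless separated only by empty subtrees, but such a configuration still corresponds to an empty compartment and is counted consistently.) Hence each empty compartment of $T$ yields exactly one plateau of $\pi$ and conversely, giving $\plat(\pi)=\emp(T)$; this extends Equation~\eqref{eq:lea}, where for $k=2$ every non-root vertex has a single compartment, and an empty compartment is exactly a leaf.

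The main obstacle I anticipate is the careful verification that the cyclic word read around a non-root vertex $v$ is genuinely the word in Equation~\eqref{eq:cdes_k}, including correctly accounting for the ``wrap-around'' contribution between the rightmost recorded label at $v$ and the edge to its parent, and for the adjacencies created by empty compartments (consecutive walls). One must check that no transition of $\pi$ is counted at two different vertices and that every transition is counted at some vertex, i.e.\ that the recorded sequence's consecutive pairs partition cleanly among the vertices. This is precisely where the half-edge labeling and the ``read the wall once'' rule must be handled with care, especially at the boundaries where the walk descends into or returns from a subtree. Once this local-to-$v$ decomposition is established rigorously, the three identities follow simultaneously by summation, just as in Lemma~\ref{lem:cdes}.
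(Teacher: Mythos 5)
Your proposal is correct and follows essentially the same route as the paper: both reduce each consecutive pair of entries of $\pi$ to a local transition at a single vertex $v$, identify the word read counterclockwise around $v$ (with half-edges carrying the parent's label $\ell$) with the word in Equation~\eqref{eq:cdes_k} so that the descents and ascents contributed at $v$ are exactly $\cdes(v)$ and $\casc(v)$, and match plateaus with empty compartments. The only cosmetic difference is that the paper phrases the half-edge contributions as insertions of extra copies of $\ell$ into the $k=2$ picture of Lemma~\ref{lem:cdes}, while you re-derive the vertex-local decomposition from scratch; the substance is identical.
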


\begin{proof}
The proof of the equality $\des(\pi)=\cdes(T)$ is similar to the proof of Lemma~\ref{lem:cdes}, the only difference being the contribution to $\pi=\phi(T)$ of the half-edges at each non-root vertex $v$ of $T$. These half-edges are labeled with the label $\ell$ of the edge $e$ between $v$ and its parent, and then traversed by the depth-first walk in the definition of $\phi$. Each such half-edge $h$ creates an entry $\ell$ in $\pi$, which is inserted between the label of the edge or half-edge incident to $v$ immediately to the left of $h$ (or, in its absence, the edge $e$), and the label of the edge or half-edge immediately to the right of $h$ (or, in its absence, the edge $e$).
With these additional entries in $\pi=\phi(T)$, the contribution to $\des(\pi)$ of all the visits to $v$ of the depth-first walk around $T$ equals $\cdes(v)$, as defined in Equation~\eqref{eq:cdes_k}. 
If $v$ is the root of $T$, its contribution to $\des(\pi)$ is also $\cdes(v)$, like in the proof of Lemma~\ref{lem:cdes}. Adding the contributions of all the vertices of $T$, we see that $\cdes(T)=\sum_v \cdes(v)=\des(\pi)$.

The equality $\asc(\pi)=\casc(T)$ is proved analogously by symmetry.

Finally, to show that $\plat(\pi)=\emp(T)$, note that a plateau in $\pi=\phi(T)$ occurs when two edges or half-edges (or one of each) of $T$ with the same label are traversed one immediately after the other by the depth-first walk on $T$, which happens precisely at empty compartments of $T$. 
\end{proof}

By Lemma~\ref{lem:cdes_k}, $\oP^{(k)}(q,t,u;z)$ is also the EGF for compartmented trees by the number of cyclic ascents, the number of cyclic descents, and the number of empty compartments: 
\begin{equation}\label{eq:oPT}
\oP^{(k)}(q,t,u;z)=\sum_{n\ge0} \sum_{T\in\T^k_n} q^{\casc(T)}t^{\cdes(T)}u^{\emp(T)} \frac{z^n}{n!}.
\end{equation}

Paralleling the proof of Theorem~\ref{thm:main}, we will consider a recursive description of compartmented trees. Let ${\T^k_n}'\subseteq\T^k_n$ be the subset of trees whose root has exactly one child, and let 
\begin{equation}\label{eq:Rkn}
R^{(k)}_n(q,t,u)=\sum_{T'\in{\T^k_n}'} q^{\casc(T')-1}t^{\cdes(T')-1}u^{\emp(T')}.
\end{equation}
Let $\T^k=\bigcup_{n\ge0}\T^k_n$ and ${\T^k}'=\bigcup_{n\ge1}{\T^k_n}'$. 
The next lemma generalizes Lemma~\ref{lem:T'}.

\begin{lemma}\label{lem:T'_k}
We have
$$\sum_{n\ge1} R^{(k)}_n(q,t,u) \frac{z^n}{n!}=z\,(\oP^{(k)}(q,t,u;z)-1+u)^{k-1}.$$
\end{lemma}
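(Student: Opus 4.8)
The statement generalizes Lemma~\ref{lem:T'}, so I would mimic that proof while accounting for the compartment structure. The goal is to relate trees in ${\T^k}'$ (root with exactly one child) to arbitrary compartmented trees. In Lemma~\ref{lem:T'} the key was that attaching a new root via an edge labeled $n$ contributes exactly one cyclic descent and, after accounting for cyclic rotations of labels, each tree in $\T'_n$ arises uniquely. Here the new twist is the factor $(\oP^{(k)}(q,t,u;z)-1+u)^{k-1}$, whose $k-1$ exponent must come from the $k-1$ compartments hanging below the unique child of the root, and the $-1+u$ shift must encode the possibility that a compartment is empty.

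\textbf{Key steps.} First I would set up the two operations as in Lemma~\ref{lem:T'}: for $T'\in{\T^k_n}'$ let $T'+i$ add $i$ modulo $n$ to all labels, and note as before that $\casc$, $\cdes$, and $\emp$ are all invariant under this cyclic relabeling (the argument for $\cdes$ is verbatim from Lemma~\ref{lem:T'}, the one for $\casc$ is symmetric, and $\emp$ depends only on tree shape, not labels). Second, and this is the heart of the matter, I would give a structural decomposition of a tree $T'\in{\T^k_n}'$: its root has a single child $v$, and hanging below $v$ are $k-1$ compartments, each of which is (the root of) a sequence of compartmented subtrees, i.e.\ an arbitrary element of $\T^k$. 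Thus after stripping the root edge (carrying the largest label, say $n$, so that its removal costs exactly one cyclic descent and one cyclic ascent at the new root, matching the $q^{-1}t^{-1}$ in \eqref{eq:Rkn}), the child $v$ together with its $k-1$ compartments is exactly a $(k-1)$-tuple of trees from $\T^k$. The generating function for each compartment is $\oP^{(k)}(q,t,u;z)$ \emph{except} that an empty compartment must be weighted by $u$ rather than $1$, because by Lemma~\ref{lem:cdes_k} an empty compartment is precisely a plateau and contributes a factor of $u$ via $\emp$. This replaces the constant term $1$ of $\oP^{(k)}$ by $u$, producing the factor $(\oP^{(k)}(q,t,u;z)-1+u)^{k-1}$; the leading $z$ records the single new edge. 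Third, I would verify that summing $t^{\cdes(T)}q^{\casc(T)}u^{\emp(T)}$-type weights is multiplicative across the $k-1$ independent compartments, using the labeled product rule for EGFs, and that the cyclic-relabeling argument accounts for the remaining bookkeeping exactly as in Lemma~\ref{lem:T'}.

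\textbf{Main obstacle.} The subtle point I expect to fight with is the bookkeeping of the $-1+u$ shift and of the $q^{-1}t^{-1}$ normalization in the definition of $R^{(k)}_n$. I would need to confirm that the vertex $v$ (the unique child of the root) contributes exactly one cyclic ascent and one cyclic descent arising from its incident half-edges/edges in the ``boundary'' position, so that these are canceled by the $\casc(T')-1$ and $\cdes(T')-1$ exponents, leaving the compartments' interiors to be enumerated freely by $\oP^{(k)}$. Equally delicate is justifying that an empty compartment of $v$ should contribute $u$: one must check against Lemma~\ref{lem:cdes_k} that such an empty compartment is genuinely the only source of a plateau attributable to $v$, and that the nonempty-compartment case is correctly captured by the non-constant part $\oP^{(k)}-1$ with no spurious double counting of ascents or descents at the compartment walls. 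Once these local contributions are pinned down, the passage to exponential generating functions and the final identity follow formally, just as in the proof of Lemma~\ref{lem:T'}.
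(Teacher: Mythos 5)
Your proposal follows essentially the same route as the paper: strip the root edge (normalized to carry the label $n$, with the cyclic relabeling $T'+i$ supplying the factor of $n$) and identify the $k-1$ compartments of its lower endpoint with a $(k-1)$-tuple of trees in $\T^k$, weighting an empty compartment by $u$ to obtain $(\oP^{(k)}(q,t,u;z)-1+u)^{k-1}$. One small correction to your bookkeeping: the single extra cyclic descent and cyclic ascent cancelled by the $-1$ exponents in $R^{(k)}_n$ are contributed by the \emph{new root} (a vertex with one child), whereas at the child $v$ the contribution computed with the interleaved largest label $n$ exactly equals the sum of the descent (resp.\ ascent) contributions of the compartment roots, with no extra $\pm1$ there.
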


\begin{proof}
In the proof of Lemma~\ref{lem:T'}, we showed that every tree $T'\in\T'_n$ can be obtained as $T'=T^{|n}+j$ for a unique $j\in\{0,1,\dots,n-1\}$ and a unique $T\in\T_{n-1}$.

Here we generalize this construction to compartmented trees. 
Using the notation from~\cite{flajolet_analytic_2009}, consider the class $\Seq_{k-1}(\T^{k})$, which consists of sequences of $k-1$ trees from $\T^{k}$ whose edges have been relabeled with distinct labels from $1$ up to the total number of edges, so that the relative order of labels within each tree is preserved. If $\vec{T}\in \Seq_{k-1}(\T^{k})$ is a relabeling of the tuple $(T_1,\dots,T_{k-1})$, where $T_j\in\T^{k}$ for all $j$, define 
\begin{equation}\label{eq:cdesvecT}
\cdes(\vec{T})=\sum_{j=1}^{k-1}\cdes(T_j),\quad \casc(\vec{T})=\sum_{j=1}^{k-1}\casc(T_j),\quad
\emp(\vec{T})=\sum_{j=1}^{k-1}\emp(T_j)+|\{j:T_j \text{ is empty}\}|,
\end{equation}
and let $|\vec{T}|$ denote the total number of edges.
Using Equation~\eqref{eq:oPT}, the corresponding multivariate EGF is
\begin{equation}\label{eq:EGFSeq}
\sum_{\vec{T}\in\Seq_{k-1}(\T^{k})}  q^{\casc(\vec{T})}t^{\cdes(\vec{T})}u^{\emp(\vec{T})}\frac{z^{|\vec{T}|}}{|\vec{T}|!} = (\oP^{(k)}(q,t,u;z)-1+u)^{k-1}.
\end{equation}

Given $\vec{T}\in \Seq_{k-1}(\T^{k})$ with $|\vec{T}|=n-1$, construct a new tree $\vec{T}^{|n}\in{\T^k_n}'$ as follows: 
combine the $k-1$ trees in $\vec{T}$ by identifying their roots into a common vertex $v_0$, placing the trees from left to right with $k-2$ half-edges at $v_0$ separating them, and attach an edge with label $n$ from $v_0$ to a new root vertex. 

Using the definitions from Equation~\eqref{eq:cdesvecT}, we claim that 
\begin{equation}\label{eq:cdesvecTplus1}
\cdes(\vec{T}^{|n})=\cdes(\vec{T})+1, \quad \casc(\vec{T}^{|n})=\casc(\vec{T})+1, \quad \emp(\vec{T}^{|n})=\emp(\vec{T}).
\end{equation}
To prove the first equality, suppose that $\vec{T}$ is a relabeling of the tuple $(T_1,\dots,T_{k-1})$. If $v$ is a non-root vertex of $T_j$ for some $j$, then $\cdes(v)$ is the same in $T_j$ as it is in $\vec{T}^{|n}$. On the other hand, assuming that the edges incident to the root of $T_j$ have labels $a_{j,1},a_{j,2},\dots,a_{j,d_j}$ from left to right, the root of $T_j$ contributes $\des(a_{j,1}a_{j,2}\dots a_{j,d_j})$ to $\cdes(\vec{T})$ for each $j$, whereas the corresponding vertex $v_0$ in $\vec{T}^{|n}$ contributes
\begin{align*}
\cdes(v_0)&=\cdes(n a_{1,1}\dots a_{1,d_1}n a_{2,1}\dots a_{2,d_2}n \dots n a_{k-1,1}\dots a_{k-1,d_{k-1}})=
\sum_{j=1}^{k-1} \cdes(n a_{j,1}a_{j,2}\dots a_{j,d_j})\\
&=\sum_{j=1}^{k-1} \des(a_{j,1}a_{j,2}\dots a_{j,d_j})
\end{align*}
to $\cdes(\vec{T}^{|n})$. Finally, the new root of $\vec{T}^{|n}$ contributes an additional cyclic descent, proving the first equality in Equation~\eqref{eq:cdesvecTplus1}.
A symmetric argument proves the analogous statement for $\casc$. The third equality in Equation~\eqref{eq:cdesvecTplus1} is clear by construction, since each empty $T_j$ contributes an additional empty compartment in $\vec{T}^{|n}$.

For $T'\in{\T^k_n}'$ and $i\in\{0,1,\dots,n-1\}$, let $T'+i\in{\T^k_n}'$ be the tree obtained by adding $i$ modulo $n$ to each label of $T'$. 
As in the proof of Lemma~\ref{lem:T'}, this operation preserves the number of cyclic descents, that is, $\cdes(T'+i)=\cdes(T')$ for all $T'\in{\T^k_n}'$.
In addition, $\casc(T'+i)=\casc(T')$ by symmetry, and $\emp(T'+i)=\emp(T')$ because adding $i$ does not change the underlying unlabeled tree, and in particular its number of empty compartments.

Noting that every tree $T'\in{\T^k_n}'$ can be obtained as $T'=\vec{T}^{|n}+i$ for a unique $\vec{T}\in \Seq_{k-1}(\T^{k})$ with $|\vec{T}|=n-1$, and a unique $i\in\{0,1,\dots,n-1\}$, we have
\begin{align*}
R^{(k)}_n(q,t,u)&=\sum_{T'\in{\T^k_n}'} q^{\casc(T')-1}t^{\cdes(T')-1}u^{\emp(T')}\\
&=\sum_{\substack{\vec{T}\in\Seq_{k-1}(\T^{k}) \\ |\vec{T}|=n-1}} \sum_{i=0}^{n-1} q^{\casc(\vec{T}^{|n}+i)-1}t^{\cdes(\vec{T}^{|n}+i)-1}u^{\emp(\vec{T}^{|n}+ )}\\
&=\sum_{\substack{\vec{T}\in\Seq_{k-1}(\T^{k}) \\ |\vec{T}|=n-1}} n\, q^{\casc(\vec{T}^{|n})-1}t^{\cdes(\vec{T}^{|n})-1}u^{\emp(\vec{T}^{|n})}\\
&=n\sum_{\substack{\vec{T}\in\Seq_{k-1}(\T^{k}) \\ |\vec{T}|=n-1}} q^{\casc(\vec{T})}t^{\cdes(\vec{T})}u^{\emp(\vec{T})}
\end{align*}
for every $n\ge1$. Multiplying both sides by $z^n/n!$, summing over $n$, and using Equation~\eqref{eq:EGFSeq}, we get
$$\sum_{n\ge1} R^{(k)}_n(q,t,u) \frac{z^n}{n!}=
\sum_{n\ge1} \sum_{\substack{\vec{T}\in\Seq_{k-1}(\T^{k}) \\ |\vec{T}|=n-1}} q^{\casc(\vec{T})}t^{\cdes(\vec{T})}u^{\emp(\vec{T})} \frac{z^n}{(n-1)!}
=z\,(\oP^{(k)}(q,t,u;z)-1+u)^{k-1}.$$
\end{proof}

\begin{proof}[Proof of Theorem~\ref{thm:main_plat_k}]
Recall that, by Equation~\eqref{eq:oPT}, $\oP^{(k)}(q,t,u;z)$ is the EGF for the class $\T^k$ with respect to $\casc$, $\cdes$ and $\emp$. 

Every $T\in\T^k$ can be decomposed as a sequence of non-empty trees $T'_1,T'_2,\dots,T'_r\in{\T^k}'$  with a common root, denoted by $v_0$, with edges relabeled from $1$ up to the number of edges of $T$ as usual. In this decomposition, $\emp(T)=\sum_{i=1}^r \emp(T'_j)$. 
As in the proof of Theorem~\ref{thm:main}, if the edges incident to $v_0$ in $T$ have labels $a_1,a_2,\dots,a_r$ from left to right, then
$$\cdes(T)=\sum_{i=1}^r(\cdes(T'_i)-1)+\des(a_1a_2\dots a_r),$$ and similarly
$$\casc(T)=\sum_{i=1}^r(\casc(T'_i)-1)+\asc(a_1a_2\dots a_r).$$
It follows that 
\begin{multline*}
\sum_{T\in\T^k_n} q^{\casc(T)}t^{\cdes(T)}u^{\emp(T)} \\
= \sum_{\substack{\{B_1,B_2,\dots,B_r\}\\ \text{partition of $[n]$}}} 
\left(\sum_{\pi\in\S_r} q^{\asc(\pi)} t^{\des(\pi)}\right)\cdot\prod_{i=1}^r \left(\sum_{T'_i\in\T'_{|B_i|}} q^{\casc(T'_i)-1}t^{\cdes(T'_i)-1}u^{\emp(T'_i)} \right).
\end{multline*}
Using the notation $R^{(k)}_n(q,t,u)$ from Equation~\eqref{eq:Rkn}, we can rewrite this equation as
$$\oP^{(k)}_n(q,t,u)=\sum_{\substack{\{B_1,B_2,\dots,B_r\}\\ \text{partition of $[n]$}}} \hat{A}_r(q,t)\cdot \prod_{i=1}^r R^{(k)}_{|B_i|}(q,t,u).$$
By the Compositional Formula, and using Lemma~\ref{lem:T'_k},
\begin{align*}
\oP^{(k)}(q,t,u;z)&=\sum_{n\ge0}\oP^{(k)}_n(q,t,u)\frac{z^n}{n!}=1+\sum_{r\ge1}\hat{A}_r(q,t)\frac{1}{r!} \left(\sum_{b\ge1} R^{(k)}_{b}(q,t,u)\frac{z^{b}}{b!}\right)^r\\
&=1+\sum_{r\ge1} \hat{A}_r(q,t) \frac{1}{r!} \left(z\,(\oP^{(k)}(q,t,u;z)-1+u)^{k-1}\right)^r  \\
&=\hat{A}(q,t,z(\oP^{(k)}(q,t,u;z)-1+u)^{k-1}),
\end{align*}
proving Equation~\eqref{eq:oP}.

To extract the coefficient of $z^n$ in $\oP^{(k)}(q,t,u;z)$, first write $z=y^{k-1}$, so that
$$\oP^{(k)}(q,t,u;y^{k-1})=\hat{A}(q,t,y^{k-1}(\oP^{(k)}(q,t,u;y^{k-1})-1+u)^{k-1}).$$
Then the generating function $F(y):=F(q,t,u;y):=y\left(\oP^{(k)}(q,t,u;y^{k-1})-1+u\right)$ satisfies the equation
$$F(y)=y\left(\hat{A}(q,t,F(y)^{k-1})-1+u\right).$$
Applying Lagrange's inversion formula to $F(y)$ yields, for $n\ge1$,
\begin{align*}
\oP^{(k)}_n(q,t,u)&=n!\,[z^n]\oP^{(k)}(q,t,u;z)=n!\,[y^{(k-1)n+1}]F(y)\\
&=\frac{n!}{(k-1)n+1}\,[y^{(k-1)n}]\left(\hat{A}(q,t;y^{k-1})-1+u\right)^{(k-1)n+1}\\
&=\frac{n!}{(k-1)n+1}\,[z^n]\left(\hat{A}(q,t;z)-1+u\right)^{(k-1)n+1}.
\end{align*}
\end{proof}

\subsection{Ascents, descents and plateaus on $k$-Stirling permutations}

B\'ona proves in \cite[Prop.\ 1]{bona_real_2008} that, on the set $\Q_n$ of Stirling permutations, the statistics $\asc$, $\des$ and $\plat$ are equidistributed. Janson generalizes this result in \cite[Thm.\ 2.1]{janson_plane_2008}, proving, using a probabilistic argument, that the polynomials
\begin{equation}\label{eq:Pn}
 P_n(q,t,u)=\sum_{\pi\in\Q_n} q^{\asc(\pi)} t^{\des(\pi)} u^{\plat(\pi)}
\end{equation}
are symmetric in the three variables, and he suggests in \cite[Sec.\ 3]{janson_plane_2008} that the study of these polynomials would be interesting.
In~\cite{haglund_stable_2012}, Haglund and Visontai prove that these polynomials are {\em stable}, meaning that they do not vanish when all the variables have a positive imaginary part.

In this subsection we present a nice differential equation satisfied by the EGF of these polynomials, which we denote by
$$P(q,t,u;z)=\sum_{n\ge0}P_n(q,t,u)\frac{z^n}{n!},$$
and more generally, by the analogous multivariate EGF for $k$-Stirling permutations:
$$P^{(k)}(q,t,u;z)=\sum_{n\ge0}\sum_{\pi\in\Q^k_n} q^{\asc(\pi)} t^{\des(\pi)} u^{\plat(\pi)}\frac{z^n}{n!}.$$
We give two derivations of the differential equation. The first one uses the same techniques as in the proofs of Lemma~\ref{lem:T'_k} and Theorem~\ref{thm:main_plat_k}, including our interpretation of ascents, descents and plateaus in permutations as cyclic ascents, cyclic descents and empty compartments in compartmented trees. The second one is a more direct, conceptual proof.

\begin{theorem}\label{thm:Qplat_k}
The EGF $P(z):=P^{(k)}(q,t,u;z)$ of $k$-Stirling permutations by the number of ascents, the number of descents, and the number of plateaus satisfies the differential equation
$$P'(z)=(P(z)-1+q)(P(z)-1+t)(P(z)-1+u)^{k-1},$$
with initial condition $P(0)=1$.
\end{theorem}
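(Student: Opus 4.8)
The plan is to mirror the derivation of Theorem~\ref{thm:main_plat_k}, but to exploit the increasing condition, which should turn the functional equation~\eqref{eq:oP} into the stated differential equation. Write $P(z)=P^{(k)}(q,t,u;z)$ and work on the tree side: restricting the bijection $\phi$ of Theorem~\ref{thm:phi} to increasing compartmented trees gives a bijection $\I^k_n\to\Q^k_n$ under which $\asc,\des,\plat$ become $\casc,\cdes,\emp$, exactly as in Lemma~\ref{lem:cdes_k}. Hence $P(z)$ is the EGF for $\I^k=\bigcup_n\I^k_n$ by $\casc,\cdes,\emp$, and the entire argument of Theorem~\ref{thm:main_plat_k} can be rerun with increasing compartmented trees in place of $\T^k$.

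First I would establish the increasing analogue of Lemma~\ref{lem:T'_k}. Let ${\I^k}'\subseteq\I^k$ be the increasing trees whose root has a single child, and let $R(z)$ be the EGF of ${\I^k}'$ weighted by $q^{\casc-1}t^{\cdes-1}u^{\emp}$, in analogy with~\eqref{eq:Rkn}. In such a tree the unique root edge lies on every root-to-leaf path, so the increasing condition forces its label to be $1$; deleting it leaves the content of the $k-1$ compartments hanging from its child, i.e.\ an element of $\Seq_{k-1}(\I^k)$ on the remaining labels. Thus ${\I^k}'$ is the smallest-label boxed product $\mathcal{Z}^{\square}\star\Seq_{k-1}(\I^k)$. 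This is the point where the increasing case departs from Lemma~\ref{lem:T'_k}: there the extra root edge could carry any of $n$ labels (the rotation $T'\mapsto T'+i$), contributing a factor $n$ and hence multiplication by $z$; here the label is pinned to $1$, so the boxed product contributes an integration instead. Using the increasing analogue of~\eqref{eq:EGFSeq}, namely that $\Seq_{k-1}(\I^k)$ has weighted EGF $(P(z)-1+u)^{k-1}$, I obtain
\[
R'(z)=(P(z)-1+u)^{k-1},\qquad R(0)=0 .
\]

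Next I would run the root decomposition of Theorem~\ref{thm:main_plat_k} essentially verbatim: an increasing compartmented tree is a common root carrying a sequence of trees from ${\I^k}'$, the labels of the incident root edges contribute $\asc$ and $\des$ through the homogenized Eulerian polynomials $\hat A_r(q,t)$, and sibling order remains free because the increasing condition is a within-path condition. The Compositional Formula then yields $P(z)=\hat A\bigl(q,t;R(z)\bigr)$, which is~\eqref{eq:oP} with the argument $R(z)$ replacing $z(\oP-1+u)^{k-1}$. Finally I would invoke the Riccati-type identity
\[
\frac{\partial \hat A}{\partial x}(q,t;x)=(\hat A-1+q)(\hat A-1+t),
\]
a direct computation from the closed form of $\hat A$. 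Differentiating $P=\hat A(q,t;R)$ by the chain rule and substituting $R'=(P-1+u)^{k-1}$ and $\hat A=P$ gives $P'(z)=(P-1+q)(P-1+t)(P-1+u)^{k-1}$, with $P(0)=\hat A(q,t;0)=1$.

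The main obstacle is the first step: justifying that forcing the minimum label onto the root edge converts the boxed product into an integration, and that the weighted EGF of $\Seq_{k-1}(\I^k)$ is still $(P-1+u)^{k-1}$ in the increasing setting. The rotation argument of Lemma~\ref{lem:T'_k} is unavailable, so it must be replaced by the smallest-label boxed-product calculus; the Eulerian weighting in the root decomposition also needs a short check, since each subtree's root edge is now the minimum of its block rather than an arbitrary label. A cleaner but less self-contained alternative is the conceptual proof: via the bijection of Janson--Kuba--Panholzer between $\Q^k_n$ and $(k+1)$-ary increasing trees, under which $\asc$, $\des$ and $\plat$ count the empty slots of three prescribed kinds (one ``ascent'' slot, one ``descent'' slot, and $k-1$ ``plateau'' slots per node), the classical variety-of-increasing-trees ODE $Y'=\phi(Y)$ with per-node slot generating function $\phi(w)=(w+q)(w+t)(w+u)^{k-1}$ yields the claim immediately after setting $P=1+Y$.
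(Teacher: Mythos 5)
Your proposal is correct and follows essentially the same route as the paper's first proof: restrict $\phi$ to increasing compartmented trees, show that the single-child-root class ${\I^k}'$ has weighted EGF $\int_0^z(P(y)-1+u)^{k-1}\,dy$ (the paper obtains this as Equation~\eqref{eq:Pplat} by reusing the shift $\vec{T}^{|1}=\vec{T}^{|n}+1$ from Lemma~\ref{lem:T'_k}, where you instead use the smallest-label boxed product --- both work, and your flagged ``obstacle'' is resolved by exactly the check you describe), then compose with $\hat{A}$ via the root decomposition. The only other difference is cosmetic: you finish with the chain rule and the identity $\frac{\partial \hat{A}}{\partial x}=(\hat{A}-1+q)(\hat{A}-1+t)$, while the paper rearranges with logarithms before differentiating; these are equivalent.
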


\begin{proof}[First proof]
As discussed in Section~\ref{sec:bij_k}, the map $\phi$ restricts to a bijection between increasing compartmented trees $\I^k_n$ and $k$-Stirling permutations $\Q^k_n$. 
Thus, by Lemma~\ref{lem:cdes_k}, $P^{(k)}(q,t,u;z)$ is the EGF of increasing compartmented trees where $q$, $t$ and $u$ mark the number of cyclic ascents, the number of cyclic descents, and the number of empty compartments, respectively.

Next we adapt the construction from the proof of Lemma~\ref{lem:T'_k} to increasing trees.
Let $\I^k=\bigcup_{n\ge0}\I^k_n$ be the class of increasing compartmented trees, and let ${\I^k}'\subseteq\I^k$ be the subclass of those trees whose root has exactly one child. Every tree in ${\I^k}'$ can be obtained from a sequence $\vec{T}\in\Seq_{k-1}(\I^k)$, consisting of $k-1$ trees $T'_1,\dots,T'_{k-1}\in\I^k$ with edges relabeled as usual, by identifying the $k-1$ roots into a common vertex $v_0$, placing $k-2$ half-edges at $v_0$ to separate these trees into $k-1$ compartments, adding $1$ to every edge label, and attaching a new edge with label $1$ from $v_0$ to a new root vertex. Denote the resulting tree by $\vec{T}^{|1}\in{\I^k}'$, and observe that,
with the notation from the proof of Lemma~\ref{lem:T'_k} and letting $n=|\vec{T}|+1$, we have $\vec{T}^{|1}=\vec{T}^{|n}+1$. 
Since adding $1$ modulo $n$ to the labels preserves the statistics $\cdes$, $\casc$ and $\emp$, it follows from Equation~\eqref{eq:cdesvecTplus1} that $\cdes(\vec{T}^{|1})=\cdes(\vec{T}^{|n})=\cdes(\vec{T})+1$, $\casc(\vec{T}^{|1})=\casc(\vec{T}^{|n})=\casc(\vec{T})+1$, and $\emp(\vec{T}^{|1})=\emp(\vec{T}^{|n})=\emp(\vec{T})$. The number of edges of $\vec{T}^{|1}$ is $|\vec{T}|+1$.

Noting that every $T'\in{\I^k}'$ is obtained as $T'=\vec{T}^{|1}$ for a unique $\vec{T}\in\Seq_{k-1}(\I^k)$, and using Equation~\eqref{eq:EGFSeq},
we deduce that the refined EGF for the class ${\I^k}'$ is 
\begin{align}\nonumber
\sum_{T'\in{\I^k}'}  q^{\casc(\vec{T'})-1}t^{\cdes(\vec{T'})-1}u^{\emp(\vec{T'})}\frac{z^{|T'|}}{|T'|!}&
=\sum_{\vec{T}\in\Seq_{k-1}(\T^{k})}  q^{\casc(\vec{T})}t^{\cdes(\vec{T})}u^{\emp(\vec{T})}\frac{z^{|\vec{T}|+1}}{(|\vec{T}|+1)!}\\
&=\int_0^z(P(y)-1+u)^{k-1}\,dy.
\label{eq:Pplat}
\end{align}

The decomposition from the proof of Theorem~\ref{thm:main_plat_k}, expressing trees in $\T^k$ as sequences of trees in ${\T^k}'$, restricts straightforwardly to increasing trees: every tree in $ \I^k$ can be decomposed as a sequence of trees in ${\I^k}'$ with a common root, relabeling as usual. The same argument, using now Equation~\eqref{eq:Pplat}, implies that 
$$P(z)=\hat{A}\left(q,t,\int_0^z(P(y)-1+u)^{k-1}\,dy\right)=1-q+\frac{q(q-t)}{q-te^{(q-t)\int_0^z(P(y)-1+u)^{k-1}\,dy}}.$$
Rewriting this equation as 
$$(q-t)\int_0^z(P(y)-1+u)^{k-1}\,dy=\ln(P(z)-1+t)-\ln(P(z)-1+q)+\ln(q)-\ln(t)$$
and differentiating with respect to $z$ gives the stated differential equation.
\end{proof}

\begin{proof}[Second proof]
By looking at the positions of the $1$s, every non-empty $k$-Stirling permutation $\pi\in\Q^k_n$ can be decomposed uniquely as 
$\pi=\sigma_0 1\sigma_1 1\dots 1\sigma_{k}$. The entries in each $\sigma_i$ must be pairwise disjoint, since otherwise the repeated entry together with a $1$ in between would form an occurrence of $212$ in $\pi$. Thus, each $\sigma_i$ is itself a $k$-Stirling permutation with relabeled entries. Conversely, any $(k+1)$-tuple of $k$-Stirling permutations can be used to construct a new Stirling permutation by relabeling the entries with $\{2^k,\dots,n^k\}$ so that their relative order is preserved, and using the relabeled permutations as the subsequences $\sigma_0,\sigma_1,\dots,\sigma_{k}$ above.

Next, we examine how ascents, descents, and plateaus behave under this decomposition. Note that 
$\des(\pi)=\sum_{i=0}^{k} \des(\sigma_i)$, unless $\sigma_{k}$ is empty, in which case $\pi$ has an extra descent.
Similarly, 
$\asc(\pi)=\sum_{i=0}^{k} \asc(\sigma_i)$, unless $\sigma_0$ is empty, in which case $\pi$ has an extra ascent.
On the other hand, 
$$\plat(\pi)=\sum_{i=0}^{k} \plat(\sigma_i)+|\{i\in[k-1]:\sigma_i\text{ is empty}\}|.$$ 
It follows that
$$P'(z)=(P(z)-1+q)(P(z)-1+t)(P(z)-1+u)^{k-1}.$$
The initial condition $P(0)=1$ comes from the empty permutation, completing the proof.

We remark that, through the bijection $\phi$, this decomposition of $k$-Stirling permutations is equivalent to the decomposition of non-empty increasing compartmented trees $T\in\I^k$ into $k+1$ trees $T_0,T_1,\dots,T_k\in\I^k$, obtained by splitting $T$ at the edge with label $1$ and at the half-edges of its lower endpoint, as shown in Figure~\ref{fig:tree_decomp}.
\end{proof}

\begin{figure}
\centering
 \begin{tikzpicture}[scale=.8]
     \draw[very thick] (1,0) coordinate (d0) -- (1.5,-2) coordinate (d1);
     \draw (1.1,-1.2) node {$1$};
	 \draw[fill] (d0) circle (3pt);
	 \draw[fill] (d1) circle (3pt);
     \draw (d0) -- (-2.5,-3) --  (-0.5,-3) -- (d0); 
     \draw (-.8,-2) node {$T_0$};
     \draw (d0) -- (3.5,-3) --  (5,-3) -- (d0); 
     \draw (3.4,-2) node {$T_{k+1}$};
     \draw (d1) -- (-1,-5) --  (.5,-5) -- (d1); 
     \draw (0.4,-4) node {$T_1$};
     \draw[very thick] (d1) -- (1.35,-3);
     \draw[fill] (1.5,-3) circle (.7pt);
     \draw[fill] (1.6,-3) circle (.7pt);
     \draw[fill] (1.7,-3) circle (.7pt);
     \draw[very thick] (d1) -- (1.85,-3);
     \draw (d1) -- (3,-5) --  (4.5,-5) -- (d1); 
     \draw (3,-4) node {$T_k$};
     \draw (-1.5,-3) node[below] {$P-1+q$};
     \draw (-.25,-5) node[below] {$P-1+u$};
     \draw (1.75,-5) node[below] {$\cdots$};
     \draw (3.75,-5) node[below] {$P-1+u$};
     \draw (4.25,-3) node[below] {$P-1+t$};
\end{tikzpicture}
      \caption{The decomposition of increasing compartmented trees from the second proof of Theorem~\ref{thm:Qplat_k}.}
      \label{fig:tree_decomp}
\end{figure}
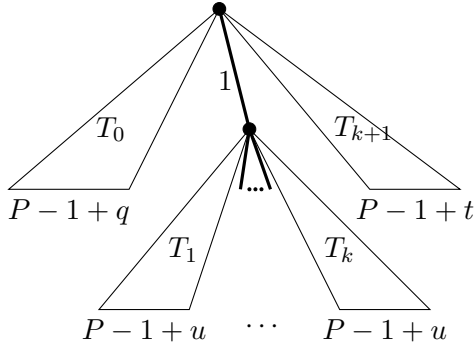

For $k=2$, the differential equation in Theorem~\ref{thm:Qplat_k} becomes simply
\begin{equation}\label{eq:Qplat} P'(z)=(P(z)-1+q)(P(z)-1+t)(P(z)-1+u). \end{equation}
The symmetry among the variables $q,t,u$ in this equation immediately implies that the joint distribution of the statistics $\asc$, $\des$ and $\plat$ on $\Q_n$ is symmetric, in the sense that it is invariant under any permutation of the three statistics, agreeing with \cite[Thm.\ 2.1]{janson_plane_2008}.

Setting $q=u=1$ and making the change of variable $z=\frac{z}{(1-t)^2}$ in Equation~\eqref{eq:Qplat} yields a different proof of \cite[Thm.\ 2.4]{gessel_stirling_1978}.

\section{Further directions}
\label{sec:further}

In this section we discuss some open problems and potential directions of further research.

\begin{problem}
Give a combinatorial proof of Theorem~\ref{thm:QQn}, in analogy to Gessel and Stanley's second proof of \cite[Thm.\ 2.1]{gessel_stirling_1978} for Stirling polynomials.
\end{problem}

In the statement of Theorem~\ref{thm:QQn}, the coefficient of $t^m$ in $\oQ_n(t)/(1-t)^{2n+1}$ can be interpreted as the number of permutations $\pi\in\QQ_n$ with $m$ bars inserted in some of the $2n+1$ spaces between the entries of $\pi$ (allowing bars to be inserted before the first entry and after the last entry), satisfying that there is some bar in each descent of $\pi$. A combinatorial proof would be obtained by showing that the number of such barred permutations equals $\frac{m^n}{n+1}\binom{m+n}{m}$.


Our second question asks for a bijective proof of Equation~\eqref{eq:QI}, which states that the number of quasi-Stirling permutations $\pi\in\QQ_n$ with $\des(\pi)=m+1$ equals the number of injections $[n-1]\to[2n]$ with $m$ excedances, for all $n$ and $m$ .

\begin{problem}
Describe a bijection $f:\QQ_n\to\In_{2n,n+1}$ such that $\des(\pi)-1=\exc(f(\pi))$ for all $\pi\in\QQ_n$.
\end{problem}


Viewing quasi-Stirling permutations as noncrossing matchings of $[2n]$ whose arcs are labeled with the labels $1,2,\dots,n$, it is reasonable to
consider the closely related set of labeled {\em nonnesting} matchings. These correspond to the set $\QQQ_n$ of permutations of $\{1,1,2,2,\dots,n,n\}$ that avoid $1221$ and $2112$. By definition, $\Q_n\subseteq\QQQ_n$. It is easy to see that $|\QQQ_n|=n!C_n=|\QQ_n|$, since unlabeled nonnesting matchings, just like their noncrossing counterparts, are also counted by the Catalan numbers (see \cite{chen_crossings_2007} for more information on crossings and nestings in matchings), and there are $n!$ ways to assign labels to the arcs. The distribution of the number of descents on $\QQQ_n$, which is different from its ditribution on $\QQ_n$, is the topic of a forthcoming preprint~\cite{archer_descents_nodate}.

Finally, it may be interesting to study the distribution on $k$-quasi-Stirling permutations of other statistics such as the number of inversions, the major index, the number of left-to-right minima, the number of inverse descents, the number of blocks of specified sizes, or the distance between occurrences of elements. The distribution of these statistics on Stirling and $k$-Stirling permutations has been studied in~\cite{park_r-multipermutations_1994,park_inverse_1994,janson_generalized_2011,kuba_analysis_2011}.

\subsection*{Acknlowledgements}
The author thanks Kassie Archer and Adam Gregory for making him aware of Conjecture~\ref{conj:archer} and the notion of quasi-Stirling permutations, as well as for helpful discussions.

\bibliographystyle{plain}
\bibliography{quasiStirling}

\end{document}